\numberwithin{equation}{section}
\theoremstyle{plain}
\newtheorem{corollary}[equation]{Corollary}
\theoremstyle{definition} 
\newtheorem{definition}[equation]{Definition}
\newtheorem{lemma}[equation]{Lemma}
\newtheorem{proposition}[equation]{Proposition}
\newtheorem{example}[equation]{Example}
\newtheorem{remark}[equation]{Remark}
\newcommand{\unit}{\mathbbm{1}} 
\newcommand{\Z}{{\mathbb Z}}  
\newcommand{\cA}{{\mathcal A}}  
\newcommand{\cB}{{\mathcal B}}  
\newcommand{\cC}{{\mathcal C}}  
\newcommand{\cP}{{\mathcal P}}  
\newcommand{\cI}{{\mathcal I}}  
\newcommand{\cE}{{\mathcal E}}  
\newcommand{\cV}{{\mathcal V}}  
\newcommand{\Def}[1]{\textbf{\boldmath{#1}}}
\begin{document}
	\title{The Hurewicz model structure on simplicial $R$-modules}															
	\author{Arnaud Ngopnang Ngompe}
	\address{University of Regina, 3737 Wascana Pkwy, Regina, SK S4S 0A2, Canada}
	\email{\url{ann037@uregina.ca}}
	
	\begin{abstract} 
		By a theorem of Christensen and Hovey, the category of non-negatively graded chain complexes has a model structure, called the h-model structure or Hurewicz model structure, where the weak equivalences are the chain homotopy equivalences. The Dold--Kan correspondence induces a model structure on the category of simplicial modules. In this paper, we give a description of the two model categories and some of their properties, notably the fact that both are monoidal.
	\end{abstract}
	
	\keywords{chain complex, homotopy equivalence, model category, monoidal model category, enriched model category, Dold--Kan correspondence}
	
	\subjclass[2020]{Primary 18G31; Secondary 18N40, 55U10}

	\maketitle
	
	\tableofcontents
	
	\section*{Introduction}
	\subsection*{Motivation}
	For a nice bicomplete abelian category $\cA$, two of the well-known standard model structures on the category $\mathrm{Ch}(\cA)$ of unbounded chain complexes are the projective model structure, for which weak equivalences are quasi-isomorphisms and fibrations are degreewise epimorphisms, and the Hurewicz model structure, for which weak equivalences are chain homotopy equivalences and fibrations are degreewise split epimorphisms. May and Ponto \cite[Chapter~18]{may2011more} gave a clear description of these two model structures when $\cA$ is the category of left $R$-modules for a given ring $R$, together with some of their monoidal properties. The Hurewicz model structure on the category of unbounded chain complexes $\mathrm{Ch}(R)$ was introduced by Golasi\'{n}ski and Gromadzki \cite{golasinski1982homotopy}, using work of Kamps \cite{kamps1978note}. It was later constructed using different methods in \cite{cole1999homotopy}, \cite{schwanzl2002strong}, \cite{christensen2002quillen}, \cite[\S\!I]{williamson2013cylindrical}, and \cite{gillespie2015homotopy}. Quillen \cite[\S\!II.4]{quillen2006homotopical} introduced the projective model structure on the category $\mathrm{Ch}_{\geq0}(\cA)$ of non-negatively graded chain complexes, that is with quasi-isomorphisms as weak equivalences and degreewise monomorphisms with degreewise projective cokernel as cofibrations. One of the motivations of this work was to complete this square, that is to give an explicit description of the Hurewicz model structure on $\mathrm{Ch}_{\geq0}(\cA)$. David White \cite{hmodonNCh} pointed out that such a model structure can be obtained using work of Christensen and Hovey \cite{christensen2002quillen}, which was kindly confirmed to us by Dan Christensen. In \cite{christensen2002quillen}, given a bicomplete abelian category $\cA$ with a projective class, Christensen and Hovey construct a model structure on $\mathrm{Ch}(\cA)$ that reflects the homological algebra of the projective class in the sense that it encodes the Ext groups and more general derived functors. A general description of such a model structure on the category $\mathrm{Ch}_{\geq0}(\cA)$ of non-negatively graded chain complexes of objects of $\cA$ under mild conditions is given by \cite[Corollary~6.4]{christensen2002quillen}. 
	\subsection*{Main results}
	In this work, we show that the Hurewicz model structure on $\mathrm{Ch}_{\geq0}(\cA)$ satisfies similar properties, in particular it is monoidal, as its analogue on unbounded chain complexes $\mathrm{Ch}(\cA)$ and the proofs follow mostly from what is done in \cite[Chapter~18]{may2011more}. The mixed model structure is obtained from the projective and the Hurewicz model structures on $\mathrm{Ch}_{\geq 0 }(\cA)$, and \textbf{Proposition~\ref{qhm}} and \textbf{Proposition~\ref{nonqhm}} give the enrichment relations between those three structures. Our main contribution in this paper is showing that the Hurewicz model structure on the category of simplicial $R$-modules $\mathrm{sMod}_R$ is monoidal, see \textbf{Proposition~\ref{sModmonoidalmodel}}. We also characterize the fibrations and cofibrations respectively in terms of homotopy lifting and homotopy extension, see \textbf{Proposition~\ref{sModmodel}}.
	\subsection*{Organization}
	The paper is organized as follows. In the Section~\ref{preliminary}, we recall some background material on Christensen--Hovey model structures, enriched model categories and some properties of the Dold--Kan correspondence. In Section~\ref{hmodel}, we describe the Hurewicz model structure on $\mathrm{Ch}_{\geq0}(R)$ as an instance of the Christensen--Hovey setup and we give some of its monoidal properties. Now that we have the projective (Quillen) and the Hurewicz model structures on $\mathrm{Ch}_{\geq0}(R)$, in Section~\ref{relation}, we give a description of the resulting mixed model structure together with the enrichment relations between these three model structures. In Section~\ref{inducedmodel}, we show that Dold--Kan correspondence transfers the Hurewicz model structure from $\mathrm{Ch}_{\geq0}(R)$ to a monoidal model structure on $\mathrm{sMod}_{R}$. Finally, in Section~\ref{Bousfield}, we explain how the Bousfield model structure on non-negatively graded cochain complexes $\mathrm{Ch}^{\geq0}(\cA)$ is recovered from the Christensen--Hovey setup.
	\subsection*{Acknowledgments}
	 I am thankful to Martin Frankland for his scientific advice and financial contribution to the realization of this work. I would like also to thank Dan Christensen for valuable discussions, and Scott Balchin for helpful discussions on the Bousfield model structure. 
	
	\section{Preliminaries}\label{preliminary}
	
	In this section, we recall some background material on Christensen--Hovey model structures, enriched model categories and some properties of the Dold--Kan correspondence.
	
	\subsection{Christensen--Hovey model structures\nopunct}\text{\ }
	
	We start by reviewing some background material from \cite{christensen2002quillen}.
	
	\begin{definition}
		Let $\cA$ be an abelian category. For an object $P$, a map \mbox{$f : B \to C$} is said to be \Def{$P$-epic} if the induced map $\cA(P, f ) : \cA(P,B) \to \cA(P, C)$ is a surjection of abelian groups. For a collection of objects $\mathcal{P}$, the map $f : B \to C$ is \Def{$\cP$-epic} if it is $P$-epic for all $P\in \cP$.
	\end{definition}
	
	\begin{example} 
		In an abelian category $\cA$, any split epimorphism $f : B \to C$ is \mbox{$\cA$-epic}. Indeed, $f : B \to C$ is a split epimorphism if and only if the induced map 
		\begin{equation*}
			\cA(P, f ) : \cA(P,B) \to \cA(P, C)
		\end{equation*}
		is a surjection of abelian groups for any object $P$.
	\end{example}
	
	\begin{definition}
		For an abelian category $\cA$, a \Def{projective class} on $\cA$ is a collection $\cP$ of objects of $\cA$ and a collection $\cE$ of maps in $\cA$ such that
		\begin{itemize}
			\item[(i)] $\cE$ is precisely the collection of all $\cP$-epic maps;
			\item[(ii)] $\cP$ is precisely the collection of all objects $P$ such that each map in $\cE$ is $P$-epic;
			\item[(iii)] for each object $B$ there is a map $P \to B$ in $\cE$ with $P \in \cP$.
		\end{itemize}
	\end{definition}

	\begin{example}
		Let $\cA$ be an abelian category. Take $\cP$ to be the collection of all objects and $\cE$ to be the collection of all split epimorphisms $B\to C$. Here $(\cP,\cE)$ is a projective class, called the \textbf{trivial projective class}.
	\end{example}
	
	\begin{example}
		Let us consider a functor $U : \cA\to \cB$ of abelian categories, together with a left adjoint $F : \cB\to \cA$ . Then $U$ and $F$ are additive, $U$ is left exact and $F$ is right exact. If $(\cP', \cE')$ is a projective class on $\cB$, we define 
					\begin{equation*}
					 			\cP := \{\text{retracts of}\ F(P)\ \text{for}\ P \in \cP'\}\ \text{and}\ \cE := \{B \to C\ \text{such that}\ U(B) \to U(C) \in \cE'\}. 
					\end{equation*}
					Then $(\cP, \cE)$ is a projective class on $\cA$, called the \textbf{pullback} of $(\cP', \cE')$ along the right adjoint $U$.
\end{example}
	
	\begin{proposition}\label{cor6.4}
		If $\cA$ is a bicomplete abelian category with a projective class $\cP$, then the category $\mathrm{Ch}_{\geq 0}(\cA)$ of non-negatively graded chain complexes of objects in $\cA$ is endowed with a model structure as follows. 
		\begin{enumerate}
			\item A map $f$ is a weak equivalence if $\cA(P, f)$ is a quasi-isomorphism for each $P \in \cP$. 
			\item A map $f$ is a fibration if $\cA(P, f)$ is surjective in positive degrees (but not necessarily in degree $0$) for each $P \in \cP$. 
			\item A map $f$ is a cofibration if it is a degreewise split monomorphism with degreewise $\cP$-projective cokernel. 
		\end{enumerate}
		Every complex is fibrant, and a complex is cofibrant if and only if it is a complex of $\cP$-projectives.
	\end{proposition}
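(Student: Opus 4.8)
The plan is to realize this as a cofibrantly generated model structure, via the standard sphere-and-disk construction adapted to the relative homological algebra of the projective class $\cP$. For $P \in \cP$ and $n \geq 0$, write $S^n(P)$ for the complex with $P$ concentrated in degree $n$, and for $n \geq 1$ write $D^n(P)$ for the complex with $P$ in degrees $n$ and $n-1$ and identity differential. I would take the generating cofibrations to be
\begin{equation*}
I = \{\, S^{n-1}(P) \hookrightarrow D^n(P) : n \geq 1,\ P \in \cP \,\} \cup \{\, 0 \to S^0(P) : P \in \cP \,\}
\end{equation*}
and the generating trivial cofibrations to be $J = \{\, 0 \to D^n(P) : n \geq 1,\ P \in \cP \,\}$.

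First I would dispatch the formal axioms. Bicompleteness of $\mathrm{Ch}_{\geq 0}(\cA)$ is inherited degreewise from $\cA$. Each functor $\cA(P,-)$ sends a chain complex to a chain complex of abelian groups, so the two-out-of-three property and closure under retracts for the weak equivalences follow from the same facts for quasi-isomorphisms in $\mathrm{Ch}_{\geq 0}(\mathrm{Ab})$, applied for every $P \in \cP$.

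The next step identifies the fibrations and acyclic fibrations by computing lifting properties. A chain map $D^n(P) \to X$ is determined by an element of $\cA(P, X_n)$, so the right lifting property against $0 \to D^n(P)$ says exactly that $\cA(P, X_\bullet) \to \cA(P, Y_\bullet)$ is surjective in degree $n$; ranging over $n \geq 1$ and $P \in \cP$ recovers the stated fibration condition. Likewise $\mathrm{Hom}(S^{n-1}(P), X)$ is the group of $(n-1)$-cycles of $\cA(P,X)$ and $\mathrm{Hom}(S^0(P), X) = \cA(P, X_0)$, and a direct computation shows that the maps with the right lifting property against all of $I$ are precisely those that are $\cP$-epic in \emph{every} degree and weak equivalences, i.e.\ the acyclic fibrations; note that the extra degree-$0$ surjectivity not required of a mere fibration is supplied exactly by the generator $0 \to S^0(P)$. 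The claim that every complex is fibrant is then immediate (for $X \to 0$ the positive-degree $\cP$-epic condition holds vacuously), and cofibrant objects are exactly the complexes of $\cP$-projectives.

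The heart of the argument, and the main obstacle, is the two factorizations together with the concrete description of cofibrations. Here I would exploit the non-negatively graded setting to build factorizations explicitly rather than invoking a small object argument, since the projectives of a general projective class need not be small. Given $f \colon X \to Y$, I would build a (trivial cofibration, fibration) factorization by attaching disks $D^n(P)$ to $X$ in each positive degree so as to make the map $\cP$-epic there; each attached disk is $\cP$-acyclic, so the inclusion $X \to Z$ is a degreewise split monomorphism with $\cP$-projective cokernel and a weak equivalence, hence a trivial cofibration, while $Z \to Y$ is a fibration. The (cofibration, acyclic fibration) factorization is obtained by attaching spheres and disks to build, inductively from degree $0$ upward, a relative $\cP$-projective resolution of $f$. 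Finally the retract argument upgrades the generating classes to the full classes, and the delicate point is matching the maps with the left lifting property against acyclic fibrations with the degreewise split monomorphisms having degreewise $\cP$-projective cokernel: the lifting problems split over a $\cP$-projective, and the bounded-below degreewise structure lets one solve them inductively starting from the bottom degree, which is precisely what makes the relaxed degree-$0$ fibration condition compatible with a genuine model structure.
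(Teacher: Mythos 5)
The paper itself offers no proof of this proposition: it is quoted from Christensen--Hovey \cite[Corollary~6.4]{christensen2002quillen}, whose own argument runs through simplicial objects --- a model structure on $s\cA$ for a pointed category with a projective class, obtained by adapting Quillen's argument for $s\cA$ with enough projectives, and then transported across the Dold--Kan correspondence once $\cA$ is abelian (this is exactly the route the paper replays dually in Section~\ref{Bousfield}, see \textbf{Proposition~\ref{chris63}}). Your direct attack on $\mathrm{Ch}_{\geq 0}(\cA)$ is therefore a genuinely different route, and much of it is sound: the identification of the fibrations as the $J$-injectives; the computation that the $I$-injectives are the degreewise $\cP$-epic weak equivalences (with degree-$0$ surjectivity supplied by $0\to S^0(P)$, and conversely recoverable for any fibration that is a weak equivalence); the explicit (trivial cofibration, fibration) factorization by summing on disks $D^n(P_n)$ with $P_n\to Y_n$ a chosen $\cP$-epimorphism; and the retract argument in place of the small object argument. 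You are also right that bounded-below grading is what makes the result hold for an arbitrary projective class, with no smallness hypothesis --- precisely the point where the unbounded case of Christensen--Hovey needs extra assumptions.

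The genuine gap is the (cofibration, acyclic fibration) factorization, which you name --- ``a relative $\cP$-projective resolution of $f$, built inductively from degree $0$'' --- but do not construct, and it is the step carrying the real content. Two things must be addressed. First, ``attaching spheres and disks'' cannot mean one cell per lifting problem, since that is the small object argument you have correctly set aside; at each degree you must instead choose a \emph{single} $\cP$-epimorphism from an object of $\cP$ onto an object of $\cA$ assembled from $X$, $Y$ and the previously constructed degrees. Second, the induction must guarantee that $\cA(P,Z)\to\cA(P,Y)$ is a quasi-isomorphism for \emph{every} $P$ simultaneously, and homology does not commute with $\cA(P,-)$; what saves the construction is that $\cA(P,-)$ is left exact, so the $n$-cycles of $\cA(P,C)$ are $\cA(P,\ker(d\colon C_n\to C_{n-1}))$, and one can arrange a $\cP$-epimorphism onto the appropriate object of cycles at each stage, forcing acyclicity of the kernel of the degreewise surjection $\cA(P,Z)\to\cA(P,Y)$. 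Without spelling this out --- or without an auxiliary lemma producing, for every bounded-below complex, a degreewise $\cP$-projective complex mapping to it by a degreewise $\cP$-epic $\cP$-quasi-isomorphism --- the proof is not complete.
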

	
	\subsection{Enriched model categories\nopunct}\text{\ }
	
	Most of the material we review here can be found in \cite[\S4]{hovey1999model} and \linebreak \mbox{\cite[Chapter~16]{may2011more}}.
	
	\begin{definition} A \Def{monoidal model category} is a model category $\cC$ equipped with  the structure of a closed symmetric monoidal category $(\cC,\otimes,\unit)$ such that the  following two compatibility conditions are satisfied.
		\begin{itemize}
			\item[(i)] \textbf{Pushout-product axiom:} For every pair of cofibrations $i:X\to Y$ and \linebreak 
			$k:V\to W$, their pushout-product $i\square k$, that is the induced morphism out of the pushout
			\begin{equation*}
				\xymatrix{
					(X\otimes W)\amalg_{X\otimes V} (Y\otimes V) \ar[r]^-{i\Square k} & Y\otimes W}
			\end{equation*}
			is itself a cofibration, which moreover is acyclic if $i$ or $k$ is.
			\item[(ii)] \textbf{Unit axiom:} For every cofibrant object $X$ and every cofibrant replacement of the tensor unit $q:Q\unit \to \unit$ (i.e. a weak equivalence with cofibrant source) in $\cC$, the resulting morphism 
			\begin{equation*}
				\xymatrix{X\otimes Q\unit \ar[r]^-{X\otimes q} & X\otimes \unit \ar[r]^-{\cong} & X} 
			\end{equation*}
			is a weak equivalence.
		\end{itemize}
	\end{definition}
	
	\begin{definition}\label{def-emc} Let $\cV$ be a monoidal model category.
		A \Def{\mbox{$\cV$-enriched} model category} is a $\cV$-enriched category $\underline{\cC}$, which is tensored and cotensored over $\cV$, with the structure of a model category on the underlying category $\cC$ such that the  following two compatibility conditions are satisfied.
		\begin{itemize}
			\item[(i)] \textbf{(External) Pushout-product axiom:} For every pair of cofibrations $i:X\to Y$ in $\cC$ and $k:V\to W$ in $\cV$, their pushout-product $i\square k$
			\begin{equation*}
				\xymatrix{
					(X\otimes W)\amalg_{X\otimes V} (Y\otimes V) \ar[r]^-{i\Square k} & Y\otimes W} 
			\end{equation*}
			is itself a cofibration in $\cC$, which moreover is acyclic if $i$ or $k$ is.
			\item[(ii)] \textbf{Unit axiom:} For every cofibrant object $X$ in $\cC$ and every cofibrant replacement of the tensor unit $q:Q\unit \to \unit$ in $\cV$, the resulting morphism \begin{equation*}
				\xymatrix{X\otimes Q\unit \ar[r]^-{X\otimes q} & X\otimes \unit \ar[r]^-{\cong} & X} 
			\end{equation*}
			is a weak equivalence in $\cC$.
		\end{itemize}
\end{definition}
	
	\subsection{Chain complexes and Dold--Kan correspondence\nopunct}\text{\ }
	
	Background material on tensored and cotensored categories can be found in \linebreak \cite[\S1]{kelly1982basic}, \cite[\S 6.5]{borceux1994handbook}  and \cite[\S 3.7]{riehl2014categorical}.
	
	We recall the definition of the tensor product and the hom complex of chain complexes of $R$-modules. Before, we recall the following about $R$-modules.
	\begin{itemize}
		\item For a commutative ring $R$, the category of $R$-modules $\mathrm{Mod}_R$ endowed with the usual tensor product over $R$ and  the $R$-module structure on its hom set, \linebreak \mbox{$(\mathrm{Mod}_R, \otimes_R, R, \mathrm{Hom}_R)$} is a closed symmetric monoidal category.
		\item  For an arbitrary ring $R$, the category of left $R$-modules $\mathrm{Mod}_R$ is enriched, tensored, and cotensored over abelian groups $\mathrm{Ab}$. Here, for an abelian group $A$ and a left $R$-module $M$, the action of $R$ on the tensoring $M\otimes_{\Z} A$ is given by:
		\begin{equation*}
			r(m\otimes a) := (rm)\otimes a,\text{ for all }r\in R, m\in M\ \text{and}\ a\in A.
		\end{equation*} 
		Similarly, the action of $R$ on the cotensoring $\mathrm{Hom}_{\Z}(A,M)$ is given by:
		\begin{equation*}
			(rf)(a):=r(f(a)),\text{ for all } r\in R, f\in \mathrm{Hom}_{\Z}(A,M)\ \text{and}\ a\in A.
		\end{equation*}
	\end{itemize}
		
		\begin{definition}\label{tenCh} For a commutative ring $R$, the category of chain complexes of $R$-modules $\mathrm{Ch}(R)$ is endowed with a \Def{tensor product} defined as follows:
				\begin{equation*}
					 	(X\otimes Y)_n := \bigoplus_{i+j=n}X_i\otimes_R Y_j,\quad \text{with}\quad d(x\otimes y) :=  d(x)\otimes y+(-1)^{|x|}x\otimes d(y).
				\end{equation*}
		\end{definition}
				
		\begin{definition}\label{homCh} For an arbitrary ring $R$, the category of chain complexes of $R$-modules $\mathrm{Ch}(R)$ is endowed with a \Def{hom complex} defined as follows:
				\begin{equation*}
					 	\mathrm{\underline{Hom}}_{\mathrm{Ch}(R)}(X,Y)_n := \prod_{i\in \Z} \mathrm{Hom}_R(X_i,Y_{i+n}), \quad \text{with}\quad (df)(x) := d(f(x))-(-1)^{|f|}f(d(x))
				\end{equation*}
				which is a chain complex of abelian groups. If $R$ is commutative, then the hom complex is a chain complex of $R$-modules.
		\end{definition}
	
	    \begin{lemma}\label{unboundmonoidal}
	    	For a commutative ring $R$, the category of unbounded chain complexes $\mathrm{Ch}(R)$ endowed with the tensor product $\otimes$ and the hom complex $\mathrm{\underline{Hom}}_{\mathrm{Ch}(R)}$, \linebreak
	    	$(\mathrm{Ch}(R), \otimes, R[0], \mathrm{\underline{Hom}}_{\mathrm{Ch}(R)})$ is a closed symmetric monoidal category. Here the tensor unit $R[0]$ is the chain complex with $R$ concentrated in degree $0$.
	    \end{lemma}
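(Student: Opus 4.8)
\noindent The plan is to build the structure directly on top of the closed symmetric monoidal category $(\mathrm{Mod}_R, \otimes_R, R, \mathrm{Hom}_R)$ recalled above, and to organize the Koszul-sign bookkeeping by factoring the argument through the category of $\Z$-graded $R$-modules. Concretely, I would first forget the differentials and treat $X$ and $Y$ as graded $R$-modules, observe that graded modules form a closed symmetric monoidal category whose structure maps are induced degreewise from $\mathrm{Mod}_R$, and only then check that the differentials of Definition~\ref{tenCh} and Definition~\ref{homCh} are compatible with all the structure maps. This isolates the genuinely sign-sensitive combinatorics at the graded level, where there are no differentials to interfere, so that the chain-complex statements reduce to derivation-type checks.

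For the monoidal structure I would begin by confirming that $\otimes$ and $\mathrm{\underline{Hom}}_{\mathrm{Ch}(R)}$ really land in $\mathrm{Ch}(R)$, i.e.\ that the stated differentials satisfy $d^2=0$; for the tensor product this is the standard computation in which the two cross terms $dx\otimes dy$ cancel precisely because of the sign $(-1)^{|x|}$ in the graded Leibniz rule, since $(-1)^{|x|-1}=-(-1)^{|x|}$. The unit isomorphisms $R[0]\otimes X\cong X\cong X\otimes R[0]$ and the associativity isomorphism $(X\otimes Y)\otimes Z\cong X\otimes(Y\otimes Z)$ are induced degreewise from the corresponding isomorphisms in $\mathrm{Mod}_R$ and carry no signs, so the pentagon and triangle identities follow from their counterparts in $\mathrm{Mod}_R$ by Mac Lane's coherence theorem; the only additional verification is that each of these maps commutes with the differentials, which is immediate.

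For the symmetry I would take the braiding given by the Koszul sign rule $\tau(x\otimes y):=(-1)^{|x|\,|y|}\,y\otimes x$. There are two points to verify: first, that $\tau$ is a chain map, which is exactly what the sign $(-1)^{|x||y|}$ is designed to ensure and which drops out of a short computation matching $\tau(d(x\otimes y))$ with $d(\tau(x\otimes y))$ after using $(-1)^{-|y|}=(-1)^{|y|}$; and second, that $\tau\circ\tau=\mathrm{id}$ together with the two hexagon identities hold. The hexagons reduce to the symmetry of $\otimes_R$ in $\mathrm{Mod}_R$ together with the additivity of exponents $(-1)^{|x|(|y|+|z|)}=(-1)^{|x||y|}(-1)^{|x||z|}$, which is the single place where the sign rule is actually used.

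Finally, for closedness, the key observation is that the ordinary hom-set is recovered from the hom complex as its degree-zero cycles: unwinding Definition~\ref{homCh}, a degree-$0$ family $f=(f_i)$ satisfies $df=0$ if and only if $d f_i = f_{i-1} d$ for all $i$, that is, if and only if $f$ is a chain map, so $\mathrm{Ch}(R)(X,Y)=Z_0\,\mathrm{\underline{Hom}}_{\mathrm{Ch}(R)}(X,Y)$. I would then produce the adjunction $-\otimes Y\dashv \mathrm{\underline{Hom}}_{\mathrm{Ch}(R)}(Y,-)$ by sending a chain map $\varphi:X\otimes Y\to Z$ to $\hat\varphi:X\to\mathrm{\underline{Hom}}_{\mathrm{Ch}(R)}(Y,Z)$ with $\hat\varphi(x)(y):=\varphi(x\otimes y)$. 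On underlying graded modules this currying map is the degreewise tensor--hom adjunction of $\mathrm{Mod}_R$, so its naturality and bijectivity are automatic; the content is to check that $\varphi$ is a chain map if and only if $\hat\varphi$ is, which is again a comparison of the differential on $X\otimes Y$ with that of Definition~\ref{homCh}, and closedness on the other factor then follows from the symmetry already established. I expect the main obstacle to be nothing conceptual but rather the disciplined management of the Koszul signs, namely confirming the compatibility of the braiding and of the currying isomorphism with the differentials and the on-the-nose validity of the hexagons; these are precisely the steps that do not follow formally from the closed symmetric monoidal structure of $\mathrm{Mod}_R$.
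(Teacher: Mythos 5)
Your proposal is correct. Note, however, that the paper offers no proof of Lemma~\ref{unboundmonoidal} at all: it is stated as standard background (with the surrounding references to Kelly, Borceux, and Riehl for the enriched-category material), so there is no argument in the paper to compare against. Your direct verification is the standard one and all the key points are in place: the cancellation of the cross terms in $d^2$ on the tensor product, the Koszul-signed braiding $\tau(x\otimes y)=(-1)^{|x||y|}y\otimes x$ being a chain map and an involution, the identification $\mathrm{Ch}(R)(X,Y)=Z_0\,\mathrm{\underline{Hom}}_{\mathrm{Ch}(R)}(X,Y)$, and the sign-free currying $\hat\varphi(x)(y)=\varphi(x\otimes y)$, which one checks is a chain map precisely when $\varphi$ is (the signs in Definition~\ref{tenCh} and Definition~\ref{homCh} are matched so that no correction sign is needed here). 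Two small remarks: the hom-set adjunction you establish is exactly what ``closed'' requires, though it is worth observing that it upgrades to an isomorphism of hom complexes $\mathrm{\underline{Hom}}_{\mathrm{Ch}(R)}(X\otimes Y,Z)\cong\mathrm{\underline{Hom}}_{\mathrm{Ch}(R)}(X,\mathrm{\underline{Hom}}_{\mathrm{Ch}(R)}(Y,Z))$, which is what later sections of the paper implicitly use; and the commutativity of $R$ is needed already for $\otimes_R$ to return an $R$-module and for $\mathrm{\underline{Hom}}_{\mathrm{Ch}(R)}(X,Y)$ to be a complex of $R$-modules rather than of abelian groups, which your reduction to $(\mathrm{Mod}_R,\otimes_R,R,\mathrm{Hom}_R)$ already accounts for.
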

		
		\begin{definition}
			The \Def{good truncation functor} 
			\begin{equation*}
			\tau_{\geq 0}: \mathrm{Ch}(R) \to \mathrm{Ch}_{\geq 0}(R)
		\end{equation*}
		is defined by 
		\begin{equation*}
			(\tau_{\geq 0}C)_n:=\begin{cases}
				C_n,\ &\text{if}\ n\geq 1 \\
				\mathrm{ker}(d_0),\ &\text{if}\ n=0 \\
				0,\ &\text{otherwise}
			\end{cases}
		\end{equation*}
	for any complex $C$ in $\mathrm{Ch}(R)$.
		\end{definition}
	
	\begin{lemma}\label{geq0monoidal}
		\begin{enumerate}
			\item For a commutative ring $R$, the category $\mathrm{Ch}_{\geq 0}(R)$ of non-negatively graded chain complexes of $R$-modules endowed with the tensor product $\otimes$ and the hom complex 
			\begin{equation*}
				 			\mathrm{\underline{Hom}}_{\mathrm{Ch}_{\geq 0}(R)} := \tau_{\geq 0}(\mathrm{\underline{Hom}}_{\mathrm{Ch}(R)}), 
			\end{equation*}
			$(\mathrm{Ch}_{\geq 0}(R), \otimes, R[0], \mathrm{\underline{Hom}}_{\mathrm{Ch}_{\geq 0}(R)})$ is a closed symmetric monoidal category.
			\item For an arbitrary ring $R$, the category $\mathrm{Ch}_{\geq 0}(R)$ of non-negatively graded chain complexes of $R$-modules is enriched, tensored and cotensored over the category $\mathrm{Ch}_{\geq 0}(\Z)$ of non-negatively graded chain complexes of abelian groups.
			\begin{itemize}
				\item The enrichment is given by $\mathrm{\underline{Hom}}_{\mathrm{Ch}_{\geq 0}(R)}(X,Y)$ in $\mathrm{Ch}_{\geq 0}(\Z)$ as defined in item (1), for any objects $X$ and $Y$ in $\mathrm{Ch}_{\geq 0}(R)$.
				\item The tensoring is given by $X\otimes K$ in $\mathrm{Ch}_{\geq 0}(R)$ as defined in \textbf{Definition \ref{tenCh}} with $R=\Z$, for any objects $X$ in $\mathrm{Ch}_{\geq 0}(R)$ and $K$ in $\mathrm{Ch}_{\geq 0}(\Z)$.
				\item The cotensoring is given by $X^K:=\mathrm{\underline{Hom}}_{\mathrm{Ch}_{\geq 0}(\Z)}(K,X)$ in $\mathrm{Ch}_{\geq 0}(R)$ as defined in item (1) with $R=\Z$, for any objects $X$ in $\mathrm{Ch}_{\geq 0}(R)$ and $K$ in $\mathrm{Ch}_{\geq 0}(\Z)$.
			\end{itemize}
		\end{enumerate}
	\end{lemma}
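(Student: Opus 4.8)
The plan is to deduce both statements from their unbounded analogues by transporting structure along the inclusion $\iota\colon \mathrm{Ch}_{\geq 0}(R)\hookrightarrow \mathrm{Ch}(R)$ and its right adjoint, the good truncation $\tau_{\geq 0}$. First I would record that $\iota$ is a fully faithful strict (hence strong) symmetric monoidal embedding whose image is closed under $\otimes$: if $X$ and $Y$ are non-negatively graded then $(X\otimes Y)_n=\bigoplus_{i+j=n}X_i\otimes_R Y_j$ vanishes for $n<0$, and the unit $R[0]$ already lies in $\mathrm{Ch}_{\geq 0}(R)$. Consequently the associativity, unit and symmetry isomorphisms furnished by Lemma~\ref{unboundmonoidal} restrict, so $\mathrm{Ch}_{\geq 0}(R)$ is symmetric monoidal with unit $R[0]$. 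The only genuine work is the closed structure, because $\mathrm{\underline{Hom}}_{\mathrm{Ch}(R)}(X,Y)$ typically has nonzero terms in negative degrees even for non-negatively graded $X,Y$; this is exactly what the truncation is there to repair.

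The technical crux is the adjunction $\iota\dashv\tau_{\geq 0}$, which I would prove directly: a chain map $f\colon \iota X\to C$ has degree-zero component satisfying $d_0f_0=f_{-1}d^X_0=0$ since $X_{-1}=0$, so $f_0$ lands in $\ker(d_0)=(\tau_{\geq 0}C)_0$, while the components $f_n\colon X_n\to C_n$ for $n\geq 1$ are unconstrained by the truncation; this yields a natural bijection $\mathrm{Ch}(R)(\iota X,C)\cong \mathrm{Ch}_{\geq 0}(R)(X,\tau_{\geq 0}C)$. Granting this, the closed structure of item~(1) is formal, since for $X,Y,Z$ in $\mathrm{Ch}_{\geq 0}(R)$ one has natural isomorphisms
\begin{align*}
	\mathrm{Ch}_{\geq 0}(R)(X\otimes Y,Z)
	&\cong \mathrm{Ch}(R)(\iota X\otimes \iota Y,\iota Z)\\
	&\cong \mathrm{Ch}(R)\bigl(\iota X,\mathrm{\underline{Hom}}_{\mathrm{Ch}(R)}(\iota Y,\iota Z)\bigr)\\
	&\cong \mathrm{Ch}_{\geq 0}(R)\bigl(X,\tau_{\geq 0}\mathrm{\underline{Hom}}_{\mathrm{Ch}(R)}(\iota Y,\iota Z)\bigr),
\end{align*}
using full faithfulness of $\iota$ together with $\iota(X\otimes Y)=\iota X\otimes \iota Y$, then the closed structure of Lemma~\ref{unboundmonoidal}, then $\iota\dashv\tau_{\geq 0}$. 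The right-hand term is $\mathrm{\underline{Hom}}_{\mathrm{Ch}_{\geq 0}(R)}(Y,Z)$ by definition, so each functor $-\otimes Y$ admits a right adjoint; since a symmetric monoidal category in which every $-\otimes Y$ has a right adjoint is closed symmetric monoidal, item~(1) follows.

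For item~(2) I would run the same template with $\mathrm{Ch}_{\geq 0}(\Z)$ in place of the self-enrichment. Lifting the $\mathrm{Ab}$-enrichment of $\mathrm{Mod}_R$ recalled above degreewise and equipping it with the differentials and signs of Definitions~\ref{tenCh} and~\ref{homCh}, one first checks that $\mathrm{Ch}(R)$ is enriched, tensored and cotensored over $\mathrm{Ch}(\Z)$, with enrichment $\mathrm{\underline{Hom}}_{\mathrm{Ch}(R)}(X,Y)$, tensoring $X\otimes K$, and cotensoring $\mathrm{\underline{Hom}}_{\mathrm{Ch}(\Z)}(K,X)$. Being the right adjoint of the strong symmetric monoidal $\iota$, the truncation $\tau_{\geq 0}$ is canonically lax symmetric monoidal, so change of base along it transports this $\mathrm{Ch}(\Z)$-enrichment (restricted to non-negatively graded objects) to a $\mathrm{Ch}_{\geq 0}(\Z)$-enrichment of $\mathrm{Ch}_{\geq 0}(R)$ with hom-objects $\mathrm{\underline{Hom}}_{\mathrm{Ch}_{\geq 0}(R)}(X,Y)$. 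The tensoring $X\otimes K$ preserves non-negative grading and is inherited verbatim, whereas the cotensoring is $X^K:=\tau_{\geq 0}\mathrm{\underline{Hom}}_{\mathrm{Ch}(\Z)}(K,X)$ (carrying the induced $R$-action). Applying $\iota\dashv\tau_{\geq 0}$ on both the $R$-side and the $\Z$-side, the unbounded tensor--hom and hom--cotensor adjunctions descend to the required natural isomorphisms
\begin{align*}
	\mathrm{Ch}_{\geq 0}(R)(X\otimes K,Y)&\cong \mathrm{Ch}_{\geq 0}(\Z)\bigl(K,\mathrm{\underline{Hom}}_{\mathrm{Ch}_{\geq 0}(R)}(X,Y)\bigr),\\
	\mathrm{Ch}_{\geq 0}(R)(X,Y^K)&\cong \mathrm{Ch}_{\geq 0}(\Z)\bigl(K,\mathrm{\underline{Hom}}_{\mathrm{Ch}_{\geq 0}(R)}(X,Y)\bigr).
\end{align*}

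The main obstacle I anticipate is not any single computation but the bookkeeping needed to promote the hom-set bijections to genuine structure: checking that $\iota\dashv\tau_{\geq 0}$ is natural, that the lax monoidal structure maps of $\tau_{\geq 0}$ are compatible with composition so that the transported enrichment is a bona fide $\mathrm{Ch}_{\geq 0}(\Z)$-enrichment, and that the descended adjunctions hold at the level of hom-complexes rather than merely underlying sets. Each of these reduces to a naturality statement for $\tau_{\geq 0}$ together with the signs in Definitions~\ref{tenCh} and~\ref{homCh}, but they must be verified in order to know that the truncated internal hom genuinely represents the relevant functor. Once this is settled, closedness and the tensored/cotensored structure follow by the abstract principles quoted above, namely the existence of right adjoints to tensoring and change of enrichment along a lax symmetric monoidal functor, with no further computation.
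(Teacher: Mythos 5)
Your argument is correct, but note that the paper does not actually prove Lemma~\ref{geq0monoidal}: it is stated as recalled background, with the reader directed to standard references on tensored and cotensored categories, and the closed monoidal structure on $\mathrm{Ch}_{\geq 0}(R)$ is taken as known. What you supply is the standard derivation of that folklore fact from the unbounded case, and the key observation is exactly the right one: the inclusion $\iota\colon \mathrm{Ch}_{\geq 0}(R)\hookrightarrow \mathrm{Ch}(R)$ is a fully faithful strong symmetric monoidal embedding with $\iota\dashv\tau_{\geq 0}$, so the internal hom of $\mathrm{Ch}_{\geq 0}(R)$ must be the good truncation of the unbounded one, which is precisely how the paper defines $\mathrm{\underline{Hom}}_{\mathrm{Ch}_{\geq 0}(R)}$; your chain of isomorphisms $\mathrm{Ch}_{\geq 0}(R)(X\otimes Y,Z)\cong \mathrm{Ch}_{\geq 0}(R)(X,\tau_{\geq 0}\mathrm{\underline{Hom}}_{\mathrm{Ch}(R)}(Y,Z))$ is exactly what justifies that definition. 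The one point you flag but do not carry out --- upgrading the underlying-set adjunctions of item~(2) to isomorphisms of hom-complexes --- is genuinely needed for the enriched tensor/cotensor statements, but it follows by representability once the set-level bijections are natural (evaluate both sides on $D^n$ and use Yoneda, as the paper itself does later in the proof of Proposition~\ref{EZAWProp*}), so there is no gap in substance. Two small points worth making explicit if you were to write this out in full: the differential $(\tau_{\geq 0}C)_1\to \ker(d_0)$ is well defined because $d_0d_1=0$, and the $R$-module structure on the cotensor $\tau_{\geq 0}\mathrm{\underline{Hom}}_{\mathrm{Ch}(\Z)}(K,X)$ is the one inherited degreewise from the $\mathrm{Ab}$-cotensoring of $\mathrm{Mod}_R$ recalled before Definition~\ref{tenCh}.
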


The following can be found in \cite[\S\!II.2]{goerssJard2009simplicial}.
	
	\begin{lemma}\label{sModmonoidal}
		\begin{enumerate}
			\item For a commutative ring $R$, the category $\mathrm{sMod}_R$ of simplicial $R$-modules has a closed symmetric monoidal structure. The monoidal product is given by the degreewise tensor product defined as follows: 
			\begin{equation*}
				(A\otimes B)_n := A_n\otimes_R B_n,
			\end{equation*}
			for any simplicial $R$-modules $A$ and $B$. The constant simplicial $R$-module $c(R)$ is the tensor unit, that is:
			\begin{equation*}
				c(R)\otimes A \cong A.
			\end{equation*}
			The internal hom of simplicial $R$-modules $A$ and $B$ is the simplicial $R$-module $\mathrm{\underline{Hom}}_{\mathrm{sAb}}(A, B)$ given in degree $n$ by the $R$-module
			\begin{equation*}
				\mathrm{\underline{Hom}}_{\mathrm{sMod}_R}(A, B)_n := \mathrm{Hom}_{\mathrm{sMod}_R}(A\otimes R\Delta^n, B).
			\end{equation*}
			\item For an arbitrary ring $R$, the category $\mathrm{sMod}_R$ of simplicial $R$-modules is enriched, tensored and cotensored over the category $\mathrm{sAb}$ of simplicial abelian groups.
			\begin{itemize}
				\item The tensoring is given by $A\otimes K$ in $\mathrm{sMod}_R$ as defined in item (1) with $R=\Z$, for any objects $A$ in $\mathrm{sMod}_R$ and $K$ in $\mathrm{sAb}$.
				\item The enrichment is given by: 
				\begin{equation*}
					\mathrm{\underline{Hom}}_{\mathrm{sMod}_R}(A,B) := \mathrm{Hom}_{\mathrm{sMod}_R}(A\otimes \Z\Delta^n, B)
				\end{equation*}
				in $\mathrm{sAb}$, for any objects $A$ and $B$ in $\mathrm{sMod}_R$.
				\item The cotensoring is given by $A^K:=\mathrm{\underline{Hom}}_{\mathrm{sAb}}(K,A)$ in $\mathrm{sMod}_R$ as defined in item (1) with $R=\Z$, for any objects $A$ in $\mathrm{sMod}_R$ and $K$ in $\mathrm{sAb}$.
			\end{itemize}
		\end{enumerate}
	\end{lemma}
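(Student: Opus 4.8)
The plan is to establish both structures levelwise and then reduce every adjunction isomorphism to the free simplicial modules on the representable simplicial sets. The essential tool is the free--forgetful adjunction $R(-):\mathrm{sSet}\rightleftarrows\mathrm{sMod}_R:U$ whose left adjoint is strong monoidal from $(\mathrm{sSet},\times,\Delta^0)$ to $(\mathrm{sMod}_R,\otimes,c(R))$: in each simplicial degree one has $R[(X\times Y)_n]=R[X_n\times Y_n]\cong R[X_n]\otimes_R R[Y_n]$, so $R(X\times Y)\cong R(X)\otimes R(Y)$ naturally, while $R(\Delta^0)=c(R)$. Writing $R\Delta^n:=R(\Delta^n)$, the adjunction together with the Yoneda lemma gives $\mathrm{Hom}_{\mathrm{sMod}_R}(R\Delta^n,A)\cong\mathrm{Hom}_{\mathrm{sSet}}(\Delta^n,UA)\cong A_n$, so the objects $R\Delta^n$ corepresent the level functors and, being free, are projective generators of $\mathrm{sMod}_R$.

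For item (1) I would first note that the associativity, symmetry and unit isomorphisms of the closed symmetric monoidal category $(\mathrm{Mod}_R,\otimes_R,R,\mathrm{Hom}_R)$ induce their analogues on the levelwise product, the coherence diagrams commuting because they commute in each degree; the unit isomorphism $c(R)\otimes A\cong A$ is just $R\otimes_R A_n\cong A_n$ degreewise. The content is the tensor--hom adjunction. The cosimplicial object $n\mapsto R\Delta^n$ makes $n\mapsto\mathrm{Hom}_{\mathrm{sMod}_R}(A\otimes R\Delta^n,B)$ a simplicial set, and for commutative $R$ the pointwise $R$-action on $B$ turns each level into an $R$-module, so $\mathrm{\underline{Hom}}_{\mathrm{sMod}_R}(A,B)$ is a simplicial $R$-module. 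To verify
\[\mathrm{Hom}_{\mathrm{sMod}_R}(A\otimes B,C)\cong\mathrm{Hom}_{\mathrm{sMod}_R}(A,\mathrm{\underline{Hom}}_{\mathrm{sMod}_R}(B,C))\]
I would first treat $A=R\Delta^m$, where the right-hand side is $\mathrm{\underline{Hom}}_{\mathrm{sMod}_R}(B,C)_m=\mathrm{Hom}_{\mathrm{sMod}_R}(B\otimes R\Delta^m,C)$, which agrees with the left-hand side by symmetry of $\otimes$, with naturality in $m$ being straightforward. Since colimits in $\mathrm{sMod}_R$ are computed degreewise and $-\otimes_R B_n$ is a left adjoint, the functor $-\otimes B$ preserves colimits, so both sides of the displayed formula send colimits in $A$ to limits. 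Writing an arbitrary $A$ as a canonical colimit of representables $R\Delta^n$ then extends the isomorphism from representables to all $A$.

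For item (2) I would run the same argument with $\mathrm{Ab}$ in place of $\mathrm{Mod}_R$, which is exactly what makes it valid for an arbitrary (possibly noncommutative) ring $R$: the category $\mathrm{Mod}_R$ is enriched, tensored and cotensored over $\mathrm{Ab}$ as recalled above, and the free simplicial abelian group functor $\Z(-):\mathrm{sSet}\to\mathrm{sAb}$ is again strong monoidal, with $\Z\Delta^n:=\Z(\Delta^n)$ corepresenting the level functors of the underlying simplicial abelian group. The tensoring $A\otimes K$ is the degreewise $A_n\otimes_\Z K_n$ with $R$ acting through $A$, the enrichment is $\mathrm{\underline{Hom}}_{\mathrm{sMod}_R}(A,B)_n=\mathrm{Hom}_{\mathrm{sMod}_R}(A\otimes\Z\Delta^n,B)$, and the cotensoring is $A^K=\mathrm{\underline{Hom}}_{\mathrm{sAb}}(K,A)$ with the pointwise $R$-action. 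The two required adjunctions $\mathrm{Hom}_{\mathrm{sMod}_R}(A\otimes K,B)\cong\mathrm{Hom}_{\mathrm{sAb}}(K,\mathrm{\underline{Hom}}_{\mathrm{sMod}_R}(A,B))\cong\mathrm{Hom}_{\mathrm{sMod}_R}(A,B^K)$ are checked first for $K=\Z\Delta^n$, where they reduce to the corepresentability statement, and then extended to all $K$ by the same colimit argument.

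The main obstacle, and the step I would spend the most care on, is the verification of the tensor--hom adjunction: one must check that the internal hom really lands in $\mathrm{sMod}_R$ (resp.\ $\mathrm{sAb}$) with compatible simplicial and module structures, and that the bijection established on representables is natural and compatible with the simplicial operators, so that the density extension produces an isomorphism of simplicial objects and not merely of sets in each degree. Everything else---coherence of the associativity, symmetry and unit constraints, and the identification $R(X\times Y)\cong R(X)\otimes R(Y)$---is degreewise bookkeeping. Since these facts are standard, I would present the reduction to representables in detail and refer to \cite[\S\!II.2]{goerssJard2009simplicial} for the remaining routine checks.
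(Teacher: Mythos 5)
The paper does not actually prove this lemma: it is stated with only a pointer to \cite[\S II.2]{goerssJard2009simplicial}, so there is no in-paper argument to compare against. Your proposal is correct and is essentially the standard argument that the cited source carries out --- reduce everything to the free objects $R\Delta^n$ (resp.\ $\Z\Delta^n$), which corepresent the level functors, check the adjunction isomorphisms there, and extend using that both sides carry colimits in the relevant variable to limits --- so you are supplying the proof the paper outsources rather than taking a different route. Two steps deserve more care than you give them. First, ``writing an arbitrary $A$ as a canonical colimit of representables $R\Delta^n$'' is literally the density theorem for presheaf categories; in $\mathrm{sMod}_R$ what one actually has is a functorial presentation $\bigoplus_j R\Delta^{n_j} \rightrightarrows \bigoplus_i R\Delta^{n_i} \to A$ coming from the free--forgetful adjunction (equivalently, density of the $R\Delta^n$ as an $R$-linear subcategory), and the extension argument should be run over that presentation. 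Second, in item (2) the isomorphism $\mathrm{Hom}_{\mathrm{sMod}_R}(A, B^{\Z\Delta^n}) \cong \mathrm{Hom}_{\mathrm{sMod}_R}(A \otimes \Z\Delta^n, B)$ is not itself an instance of corepresentability of a level functor; it needs its own verification, either by a second reduction to $A = R\Delta^m$ or degreewise via the tensor--hom adjunction between $\mathrm{Mod}_R$ and $\mathrm{Ab}$. Neither point is a gap in the sense of a failing step; both are routine, and your plan is sound.
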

	
	The Dold--Kan correspondence is given by the two functors
		\begin{equation*}
		\xymatrix{
			N: \mathrm{sMod}_R  \ar@<1.8ex>[r] & \ar@<.6ex>[l]_\cong \mathrm{Ch}_{\geq 0}(R): \Gamma,
		}
	\end{equation*}
	where $N$ and $\Gamma$ are respectively called the \Def{normalization} and the \Def{denormalization}.
	
	The following can be found in \cite[\S2.3]{schwede2003equivalences}, where the map denoted $\mathrm{EZ}$ is the \textbf{Eilenberg--Zilber map}, also known as the \textbf{shuffle map}, and the map denoted $\mathrm{AW}$ is the \textbf{Alexander--Whitney map}.
	\begin{proposition}\label{EZAWProp} 
			For simplicial $R$-modules $A$ and $B$, the chain complex $N(A)\otimes N(B)$ is a deformation retract of $N(A\otimes B)$ given by the following two diagrams
			
			\begin{center}
				\begin{tabular}{cc}
				$ \tiny
				\xymatrix{
					N(A)\otimes N(B) \ar[r]^-{\mathrm{EZ}}  \ar@/_3.2pc/[rr]_-{\mathrm{id}}
					& N(A\otimes B) \ar[r]^-{\mathrm{AW}} \ar@{=}[d] & N(A)\otimes N(B), \\
					& & 
				}
			$
			&
			$\tiny
				\xymatrix{
					N(A\otimes B) \ar[r]^-{\mathrm{AW}}  \ar@/_3.2pc/[rr]_-{\mathrm{id}}
					& N(A)\otimes N(B) \ar[r]^-{\mathrm{EZ}} \ar@{=>}[d] & N(A\otimes B). \\
					& & 
				}$
			\end{tabular}
			\end{center}
			That is, the composite $\mathrm{AW} \circ \mathrm{EZ}$ is the identity, and the composite $\mathrm{EZ} \circ \mathrm{AW}$ is naturally chain homotopic to the identity.
	\end{proposition}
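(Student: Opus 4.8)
The plan is to recall the explicit formulas for the two maps, reduce the identity $\mathrm{AW}\circ\mathrm{EZ}=\mathrm{id}$ to a short combinatorial check, and obtain the homotopy for the other composite from the method of acyclic models. Throughout, I write $d_i$ and $s_j$ for the face and degeneracy operators and keep in mind that $N$ denotes the normalized chain complex, so that a summand is zero as soon as it lands in a degenerate class.

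First I would record the definitions. For $a\in A_p$ and $b\in B_q$ the shuffle (Eilenberg--Zilber) map is
\[
\mathrm{EZ}(a\otimes b) \;=\; \sum_{(\mu,\nu)}\mathrm{sgn}(\mu,\nu)\,\bigl(s_{\nu_q}\cdots s_{\nu_1}a\bigr)\otimes\bigl(s_{\mu_p}\cdots s_{\mu_1}b\bigr)\in (A\otimes B)_{p+q},
\]
the sum running over all $(p,q)$-shuffles $\mu=(\mu_1,\dots,\mu_p)$, $\nu=(\nu_1,\dots,\nu_q)$, while for $c=a\otimes b\in (A\otimes B)_n=A_n\otimes_R B_n$ the Alexander--Whitney map is
\[
\mathrm{AW}(a\otimes b)\;=\;\sum_{p+q=n}\bigl(d_{p+1}d_{p+2}\cdots d_n\,a\bigr)\otimes\bigl(d_0^{\,p}\,b\bigr),
\]
where $d_{p+1}\cdots d_n$ projects $a$ onto its front $p$-face and $d_0^{\,p}$ projects $b$ onto its back $q$-face. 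Both formulas are manifestly natural in $A$ and $B$, and the simplicial identities show that each commutes with the differentials and descends to the normalized complexes; I would carry out these verifications but they are routine.

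Next I would prove $\mathrm{AW}\circ\mathrm{EZ}=\mathrm{id}$. Applying $\mathrm{AW}$ to a single shuffle summand $(s_\nu a)\otimes(s_\mu b)$ and using the simplicial identities $d_i s_j=\mathrm{id}$ for $i\in\{j,j+1\}$ together with the commutation rules $d_i s_j=s_{j-1}d_i$ and $d_i s_j = s_j d_{i-1}$ in the remaining cases, one checks that the front-face/back-face splitting annihilates every nontrivial shuffle: each such summand produces a face operator landing on a degeneracy and hence a degenerate, so zero, class in the normalized target. Only the trivial shuffle survives, and it returns $a\otimes b$. This is the combinatorial heart of the first identity and is short once the bookkeeping is set up.

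The main obstacle is the chain homotopy $\mathrm{EZ}\circ\mathrm{AW}\simeq\mathrm{id}_{N(A\otimes B)}$, which I would obtain by the method of acyclic models applied to the bifunctor $(A,B)\mapsto N(A\otimes B)$ from pairs of simplicial $R$-modules to $\mathrm{Ch}_{\geq 0}(R)$. Taking as models the pairs of representable free simplicial $R$-modules $(R\Delta^p,R\Delta^q)$, this bifunctor is free on the models (by the Yoneda identification $\mathrm{sMod}_R(R\Delta^n,-)\cong(-)_n$) and acyclic on them: since $\Delta^p$ and $\Delta^q$ are contractible, $N(R\Delta^p\otimes R\Delta^q)$ is chain homotopy equivalent to $R[0]$, so its homology vanishes in positive degrees. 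Both $\mathrm{EZ}\circ\mathrm{AW}$ and $\mathrm{id}$ are natural chain self-maps of this bifunctor inducing the identity in degree $0$, so the acyclic models theorem produces a natural chain homotopy between them. Combined with $\mathrm{AW}\circ\mathrm{EZ}=\mathrm{id}$, this exhibits $N(A)\otimes N(B)$ as a deformation retract of $N(A\otimes B)$ via $\mathrm{EZ}$ and $\mathrm{AW}$. One could instead quote the explicit Eilenberg--MacLane homotopy, but the acyclic-models argument is cleaner and keeps naturality transparent.
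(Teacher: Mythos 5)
Your argument is correct, but note that the paper does not actually prove this proposition: it is quoted from the literature (Schwede--Shipley, \S 2.3), which in turn rests on the classical normalized Eilenberg--Zilber theorem of Eilenberg--MacLane. What you have written is essentially a reconstruction of that classical proof, and it is the right one: the explicit shuffle and Alexander--Whitney formulas; the combinatorial observation that on \emph{normalized} complexes the front-face/back-face splitting kills every nontrivial shuffle summand, so that $\mathrm{AW}\circ\mathrm{EZ}=\mathrm{id}$ holds on the nose (on unnormalized complexes it is only a homotopy); and acyclic models for $\mathrm{EZ}\circ\mathrm{AW}\simeq\mathrm{id}$. What your route buys is a self-contained proof with the naturality of the homotopy visible, which is exactly what the paper later needs in Propositions \ref{EZAWProp*}, \ref{sModmodel} and \ref{sModmonoidalmodel}. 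Two small points carry the bookkeeping you defer. First, for the composite $\mathrm{EZ}\circ\mathrm{AW}$ you are comparing two natural self-maps of the single bifunctor $(A,B)\mapsto N(A\otimes B)$, whose degree-$n$ part is generated by the images of the diagonal models $(R\Delta^n,R\Delta^n)$; the acyclicity actually used is that of $N(R(\Delta^n\times\Delta^n))$, which follows from the simplicial contractibility of $\Delta^n\times\Delta^n$. Second, the acyclic-models machinery most naturally produces a homotopy on the unnormalized Moore complexes, so you must either run the induction directly on the normalized complexes (which are free on the same models, since $N$ is a natural retract of the Moore complex) or appeal to the explicit Eilenberg--MacLane homotopy, which is known to preserve degenerate chains. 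Neither point is a gap, but both should be addressed if the proof is to replace the citation.
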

	
	\textbf{Lemmas \ref{unboundmonoidal}}, \textbf{\ref{geq0monoidal}} and \textbf{\ref{sModmonoidal}} can be extended to the case where the category of $R$-modules $\mathrm{Mod}_R$ is replaced by an abelian category $\cA$ as given by the following two propositions.
	
	\begin{proposition}\label{Prop1}
		For a bicomplete closed symmetric monoidal abelian category $\cA$,
		\begin{enumerate}
			\item $\mathrm{Ch}_{\geq 0}(\cA)$ is a closed symmetric monoidal category,
			\item $\mathrm{Ch}(\cA)$ is a closed symmetric monoidal category,
			\item $s\cA$ is a closed symmetric monoidal category.
		\end{enumerate}
	\end{proposition}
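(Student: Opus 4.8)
The plan is to define the monoidal product and internal hom on $\mathrm{Ch}(\cA)$, $\mathrm{Ch}_{\geq 0}(\cA)$ and $s\cA$ by exactly the formulas used for $\mathrm{Mod}_R$ in \textbf{Definitions~\ref{tenCh}} and~\textbf{\ref{homCh}} and in \textbf{Lemmas~\ref{unboundmonoidal}--\ref{sModmonoidal}}, with the tensor product $\otimes_R$ replaced by the given monoidal product $\otimes$ of $\cA$, the hom $\mathrm{Hom}_R$ by the internal hom $\underline{\mathrm{Hom}}_\cA$ of $\cA$, and the unit $R$ by the monoidal unit $\unit$ of $\cA$. The direct sums in the tensor product and the products in the hom complex exist because $\cA$ is bicomplete, so each candidate bifunctor is well defined; in the bounded-below case one uses the good truncation $\tau_{\geq 0}$ exactly as in \textbf{Lemma~\ref{geq0monoidal}}, after noting that $X_i\otimes Y_j$ contributes only when $i,j\geq 0$, so that $\otimes$ preserves non-negative gradings. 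Since $s\cA$ is again bicomplete (limits and colimits are computed simplicial-degreewise), the same remark applies there.

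I would then verify the coherence data—the associativity, unit and symmetry isomorphisms—by assembling them degreewise from the corresponding isomorphisms of $\cA$ together with the universal properties of $\bigoplus$ and $\prod$, inserting the Koszul sign convention of \textbf{Definition~\ref{tenCh}} verbatim. The crucial structural input is that, because $\cA$ is closed, its monoidal product $\otimes$ is a left adjoint in each variable and hence preserves all colimits, in particular the direct sums defining $(X\otimes Y)_n$; this cocontinuity is what makes the associator well defined at the chain level, since it lets one distribute $-\otimes Z_k$ over the internal sum $\bigoplus_{i+j=p}X_i\otimes Y_j$. Dually, $\underline{\mathrm{Hom}}_\cA$ preserves the relevant (co)limits, which is what one needs to check that the differential on the hom complex is well defined and squares to zero.

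The main obstacle is the verification of closedness, namely the natural isomorphism $\mathrm{Hom}(X\otimes Y,Z)\cong\mathrm{Hom}(X,\underline{\mathrm{Hom}}(Y,Z))$, carried out without recourse to elements. Here I would build the adjunction from the internal tensor--hom adjunction of $\cA$: decomposing a degree-$n$ map out of $(X\otimes Y)_n=\bigoplus_{i+j=n}X_i\otimes Y_j$ by the coproduct universal property yields components $X_i\otimes Y_j\to Z_{i+j}$, each of which transposes to $X_i\to\underline{\mathrm{Hom}}_\cA(Y_j,Z_{i+j})$, and reassembling over $j$ by the universal property of the product $\prod_j$ produces the required map $X_i\to\underline{\mathrm{Hom}}(Y,Z)_i$; completeness of $\cA$ guarantees this product exists. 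The delicate remaining point is checking that this degreewise bijection is natural and carries chain maps to chain maps, which amounts to tracking the Koszul signs and the degree shift through the differentials on $\otimes$ and $\underline{\mathrm{Hom}}_\cA$—routine sign bookkeeping, but the only genuinely technical part.

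For the simplicial statement~(3) I would repeat this argument simplicial-degreewise, replacing $R\Delta^n$ in the internal-hom formula of \textbf{Lemma~\ref{sModmonoidal}} by the copower $\Delta^n\cdot\unit$, that is the simplicial object which in degree $k$ is $\bigoplus_{(\Delta^n)_k}\unit$ (a finite coproduct furnished by cocompleteness). The closedness of the degreewise tensor then follows from the co-Yoneda presentation of a simplicial object as a colimit of such copowers together with cocontinuity of $\otimes$, exactly as over $\mathrm{Mod}_R$; I would avoid transporting the structure through the Dold--Kan equivalence, since \textbf{Proposition~\ref{EZAWProp}} makes $N$ only a lax monoidal functor whose comparison maps are homotopy equivalences rather than isomorphisms, so Dold--Kan does not match the degreewise tensor on the nose. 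In all three cases no property of $\cA$ beyond bicompleteness and the closed symmetric monoidal structure is used, so \textbf{Lemmas~\ref{unboundmonoidal}}, \textbf{\ref{geq0monoidal}} and~\textbf{\ref{sModmonoidal}} extend mutatis mutandis.
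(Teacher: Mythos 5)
The paper offers no proof of this proposition beyond the assertion that Lemmas~\ref{unboundmonoidal}, \ref{geq0monoidal} and \ref{sModmonoidal} ``can be extended'', so your proposal must be judged on its own merits. For parts (1) and (2) it is correct and is exactly the intended argument: the formulas of Definitions~\ref{tenCh} and \ref{homCh} make sense verbatim with $\otimes_R$ and $\mathrm{Hom}_R$ replaced by the monoidal product and internal hom of $\cA$, bicompleteness supplies the (co)products, cocontinuity of $\otimes$ in each variable supplies the associator and the coproduct decomposition needed for the tensor--hom adjunction, and good truncation handles the non-negatively graded case.

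Part (3) as written has a gap in the construction of the internal hom. Replacing $R\Delta^n$ by the copower $\Delta^n\cdot\unit$ in the formula of Lemma~\ref{sModmonoidal} yields $\mathrm{Hom}_{s\cA}(A\otimes(\Delta^n\cdot\unit),B)$, which is a hom-\emph{set} (an abelian group), not an object of $\cA$: in the $R$-module case this set happens to carry an $R$-module structure because $\mathrm{Mod}_R$ is enriched over itself compatibly with its underlying-set functor, but a general closed symmetric monoidal abelian $\cA$ has no such device. You need first to equip $s\cA$ with its canonical $\cA$-valued hom, the end $\underline{s\cA}(X,Y):=\int_{[k]}\underline{\mathrm{Hom}}_{\cA}(X_k,Y_k)$, which exists by completeness of $\cA$, and then set $\underline{\mathrm{Hom}}_{s\cA}(A,B)_n:=\underline{s\cA}(A\otimes(\Delta^n\cdot\unit),B)$. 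Relatedly, the ``co-Yoneda presentation as a colimit of such copowers'' is not available for copowers of $\unit$ alone unless $\unit$ generates $\cA$; the correct statement is the coend $X\cong\int^{[n]}\Delta^n\cdot X_n$ over copowers of the objects $X_n$ themselves, and with that substitution your cocontinuity argument for closedness of the degreewise tensor goes through. Your decision not to transport the structure through Dold--Kan is right, for exactly the reason you give: Proposition~\ref{EZAWProp} only makes $N$ lax monoidal up to homotopy.
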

	
	\begin{proposition}\label{Prop2}
		For a bicomplete abelian category $\cA$,
		\begin{enumerate}
			\item $\mathrm{Ch}_{\geq 0}(\cA)$ is enriched, tensored and cotensored over $\mathrm{Ch}_{\geq 0}(\Z)$,
			\item $\mathrm{Ch}(\cA)$ is enriched, tensored and cotensored over $\mathrm{Ch}(\Z)$,
			\item $s\cA$ is enriched, tensored and cotensored over $\mathrm{sAb}$.
		\end{enumerate}
	\end{proposition}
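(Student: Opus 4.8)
The plan is to reduce all three assertions to a single structural fact: every bicomplete abelian category $\cA$ is canonically enriched, tensored, and cotensored over the category $\mathrm{Ab}$ of abelian groups. Granting this, the statements follow by repeating, degreewise and levelwise, the constructions used to prove the part~(2) statements of \textbf{Lemma~\ref{geq0monoidal}} and \textbf{Lemma~\ref{sModmonoidal}} (together with the evident unbounded analogue), since those arguments never use anything about $\mathrm{Mod}_R$ beyond its $\mathrm{Ab}$-enrichment and its bicompleteness. Throughout I write $A\otimes G$ and $A^{G}$ for the $\mathrm{Ab}$-tensoring and $\mathrm{Ab}$-cotensoring of an object $A\in\cA$ by an abelian group $G$.

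First I would establish the reduction. An abelian category is by definition $\mathrm{Ab}$-enriched: each hom-set $\cA(A,B)$ is an abelian group and composition is bilinear. For the tensoring I characterize $A\otimes G$ by the natural isomorphism
\begin{equation*}
    \cA(A\otimes G,B)\;\cong\;\mathrm{Hom}_{\mathrm{Ab}}\bigl(G,\cA(A,B)\bigr).
\end{equation*}
For a free group $G=\Z^{(S)}$ this is represented by the coproduct $\coprod_{S}A$, which exists since $\cA$ is cocomplete; for general $G$ I choose a free presentation $\Z^{(T)}\to\Z^{(S)}\to G\to 0$ and set $A\otimes G:=\mathrm{coker}\bigl(A\otimes\Z^{(T)}\to A\otimes\Z^{(S)}\bigr)$, which is well defined and enjoys the stated universal property by right exactness. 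Dually, using completeness, the cotensoring is characterized by $\cA(B,A^{G})\cong\mathrm{Hom}_{\mathrm{Ab}}(G,\cA(B,A))$ and built from products and kernels. This exhibits $\cA$ as tensored and cotensored over $\mathrm{Ab}$.

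Next I would transport these structures. For item~(1), given $X\in\mathrm{Ch}_{\geq 0}(\cA)$ and $K\in\mathrm{Ch}_{\geq 0}(\Z)$, I define the tensoring by the formula of \textbf{Definition~\ref{tenCh}} with $R=\Z$, namely $(X\otimes K)_{n}=\bigoplus_{i+j=n}X_{i}\otimes K_{j}$ with the Koszul-sign differential, now reading each summand through the $\mathrm{Ab}$-tensoring; the enrichment $\mathrm{\underline{Hom}}(X,Y)$ is the good truncation $\tau_{\geq 0}$ of the complex whose degree-$n$ term is $\prod_{i}\cA(X_{i},Y_{i+n})$ with the differential of \textbf{Definition~\ref{homCh}}; and the cotensoring is $X^{K}:=\mathrm{\underline{Hom}}_{\mathrm{Ch}_{\geq 0}(\Z)}(K,X)$ formed from the $\mathrm{Ab}$-cotensoring. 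The adjunction isomorphisms
\begin{equation*}
    \mathrm{Ch}_{\geq 0}(\cA)(X\otimes K,Y)\;\cong\;\mathrm{Ch}_{\geq 0}(\Z)\bigl(K,\mathrm{\underline{Hom}}(X,Y)\bigr)\;\cong\;\mathrm{Ch}_{\geq 0}(\cA)(X,Y^{K})
\end{equation*}
are then obtained exactly as in \textbf{Lemma~\ref{geq0monoidal}(2)}. Item~(2) is identical, using \textbf{Definitions~\ref{tenCh}} and \textbf{\ref{homCh}} directly with $\otimes_R$ and $\mathrm{Hom}_R$ replaced by the $\mathrm{Ab}$-structure and with products and coproducts ranging over all of $\Z$ and no truncation. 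Item~(3) follows by repeating the simplicial constructions of \textbf{Lemma~\ref{sModmonoidal}(2)}, with $(A\otimes K)_{n}=A_{n}\otimes K_{n}$ interpreted through the $\mathrm{Ab}$-tensoring and the enriched hom and cotensoring formed likewise.

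The main point requiring care, and hence the only real obstacle, is checking that the degreewise (resp. levelwise) $\mathrm{Ab}$-adjunctions of $\cA$ genuinely assemble into the chain (resp. simplicial) maps appearing above: that the bijections respect the Koszul signs in the differential and, in item~(1), are compatible with the good truncation $\tau_{\geq 0}$. These are precisely the compatibilities verified in the module case, and they transfer without change because they rely only on the additivity of the hom-functors and on the (co)limits supplied by bicompleteness, never on a feature special to $\mathrm{Mod}_R$. I therefore expect the proof to amount to recording this transfer, the substantive input being the $\mathrm{Ab}$-tensoring and $\mathrm{Ab}$-cotensoring of $\cA$ constructed above.
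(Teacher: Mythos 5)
Your proposal is correct and follows the same route the paper intends: the paper states Proposition~\ref{Prop2} without proof, merely remarking that Lemmas~\ref{geq0monoidal} and \ref{sModmonoidal} "can be extended" from $\mathrm{Mod}_R$ to a bicomplete abelian $\cA$, and your argument supplies exactly that extension. The one substantive ingredient you add explicitly --- constructing the $\mathrm{Ab}$-tensoring $A\otimes G$ and cotensoring $A^{G}$ from free presentations using bicompleteness, then transporting them degreewise and levelwise --- is the right way to make the paper's remark precise.
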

	
	\begin{remark}
		All the results in Sections \ref{hmodel}, \ref{relation} and \ref{inducedmodel} are true for an abelian category $\cA$ satisfying the assumptions of \textbf{Proposition \ref{Prop1}} or \textbf{\ref{Prop2}}. But mostly we focus on the case $\cA=\mathrm{Mod}_R$ for convenience.
	\end{remark}
	
	\section{Hurewicz model structure on \texorpdfstring{$\mathrm{Ch}_{\geq 0}(\cA)$}{Ch0(A)}}\label{hmodel}
	
	In this section, we show that the Hurewicz model structure on $\mathrm{Ch}_{\geq0}(\cA)$ is an instance of the Christensen--Hovey model structures, for a bicomplete abelian category $\cA$. Also we prove some related properties.
	
	\begin{lemma}\label{Yoneda game}
		Let $\cB$ be an abelian category and $g:X\to Y$ a map in $\mathrm{Ch}(\cB)$. The map $g$ is a degreewise split epimorphism if and only if for every $M\in \cB$, the map \mbox{$g_*:\cB(M,X)\to \cB(M,Y)$} in $\mathrm{Ch}(\Z)$ is a degreewise surjection.
	\end{lemma}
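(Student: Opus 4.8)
The plan is to reduce the statement to a single degree and then invoke the standard characterization of split epimorphisms recalled in the Example following the definition of $\cP$-epic maps (namely, that in an abelian category $f:B\to C$ is a split epimorphism if and only if $\cB(P,f)$ is surjective for every object $P$). First I would observe that both sides of the claimed equivalence are degreewise conditions. The chain map $g_*:\cB(M,X)\to\cB(M,Y)$ is, in degree $n$, precisely the map $(g_n)_*:\cB(M,X_n)\to\cB(M,Y_n)$ induced by $g_n:X_n\to Y_n$, and since surjectivity is checked objectwise the differentials of these hom complexes play no role. Hence it suffices to prove, for each fixed $n$, that $g_n$ is a split epimorphism in $\cB$ if and only if $(g_n)_*$ is surjective for every $M\in\cB$.

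For the forward implication I would choose a section $s_n:Y_n\to X_n$ with $g_n\circ s_n=\mathrm{id}_{Y_n}$; note that this $s_n$ is only a morphism in $\cB$ and need not be a chain map, which is exactly what "degreewise split epimorphism" permits. Then for any $M$ and any $\varphi\in\cB(M,Y_n)$ the composite $s_n\circ\varphi$ satisfies $g_n\circ(s_n\circ\varphi)=\varphi$, so $\varphi$ lies in the image of $(g_n)_*$ and the latter is surjective.

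For the converse, the essential move is the Yoneda-style choice of test object: I would apply the surjectivity hypothesis with $M=Y_n$. Surjectivity of $(g_n)_*:\cB(Y_n,X_n)\to\cB(Y_n,Y_n)$ guarantees that $\mathrm{id}_{Y_n}$ lies in the image, i.e. there is $s_n:Y_n\to X_n$ with $g_n\circ s_n=\mathrm{id}_{Y_n}$, exhibiting $g_n$ as a split epimorphism. Letting $n$ range over all nonnegative integers then shows that $g$ is a degreewise split epimorphism.

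I do not expect any real obstacle here: the entire content is the evaluation trick $M=Y_n$ used in the converse, which is just the abelian-category fact already recorded in the Example. The only point to keep straight is that the sections $s_n$ are morphisms in $\cB$ that are not asserted to assemble into a chain map, matching the intended meaning of "degreewise split epimorphism."
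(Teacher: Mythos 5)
Your proposal is correct and follows essentially the same route as the paper: the converse is exactly the paper's Yoneda-style trick of taking $M=Y_n$ and lifting $\mathrm{id}_{Y_n}$, and the forward direction is the same observation that a degreewise section pushes forward under $\cB(M,-)$. The only cosmetic difference is that you spell out the composite $s_n\circ\varphi$ where the paper simply notes that split epimorphisms are preserved by the functor $\cB(M,-)$.
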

	
	\begin{proof}
		$(\implies)$ Verified since split epimorphisms are universal, hence the image of the degreewise split epimorphism $g$ by the functor $\cB(M,-):\mathrm{Ch}(\cB)\to \mathrm{Ch}(\Z)$ is a degreewise split epimorphism. 
		
		\noindent $(\impliedby)$ For $M=Y_n$, we have
			\begin{center}
				\begin{tabular}{ccc}
					$\xymatrix{\cdots \ar[d] &  \cdots \ar[d] \\
						X_{n+1} \ar[d]_-{\partial} \ar[r]^-{g_{n+1}} & Y_{n+1} \ar[d]^-{\partial} \\
						X_{n} \ar[d]_-{\partial} \ar[r]^-{g_{n}} & Y_{n} \ar@{.>}@/^/[l]^-{s_n?} \ar[d]^-{\partial} \\
						X_{n-1} \ar[d] \ar[r]^-{g_{n-1}} & Y_{n-1} \ar[d] \\
						\cdots &  \cdots }$
					
					& $\xymatrix{ & \\
						& \\
						& \\
						& \implies\\
						& \\
						& }$ &
					
					$\xymatrix{\cdots \ar[d] &  \cdots \ar[d] \\
						\cB(Y_n,X_{n+1}) \ar[d]_-{\partial} \ar[r]^-{(g_{n+1})_*} & \cB(Y_n,Y_{n+1}) \ar[d]^-{\partial} \\\
						\cB(Y_n,X_{n}) \ar[d]_-{\partial} \ar[r]^-{(g_{n})_*} & \cB(Y_n,Y_{n}) \ar[d]^-{\partial} \\
						\cB(Y_n,X_{n-1}) \ar[d] \ar[r]^-{(g_{n-1})_*} & \cB(Y_n,Y_{n-1}) \ar[d] \\
						\cdots &  \cdots }$
				\end{tabular}
			\end{center}

		Then take $s_n\in \cB(Y_n,X_n)$ such that $(g_n)_*(s_n) = g_ns_n = \mathrm{id}_{Y_n}$.
	\end{proof}
	
	\begin{lemma}\label{lem3.1}
		If $U : \cA \to \cB$ is a functor of abelian categories, with a left adjoint \mbox{$F : \cB \to \cA$}. Let $\cP$ be the projective class on $\cA$ that is the pullback of the trivial projective class on $\cB$, then
		\begin{enumerate}
			\item A map $f : X \to Y$ in $\mathrm{Ch}_{\geq0}(\cA)$ is a $\cP$-equivalence if and only if the map \mbox{$Uf : UX \to UY$} is a chain homotopy equivalence in $\mathrm{Ch}_{\geq0}(\cB)$.
			\item A map $f : X \to Y$ in $\mathrm{Ch}_{\geq0}(\cA)$ is a $\cP$-fibration if and only if the map \linebreak
			$Uf : UX \to UY$ in $\mathrm{Ch}_{\geq0}(\cB)$ is a degreewise split epimorphism in positive degrees.
		\end{enumerate}
	\end{lemma}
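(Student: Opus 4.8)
The plan is to test the two conditions only on the generating projectives $F(Q)$ and then transport them across the adjunction $F\dashv U$. By construction of the pullback projective class, $\cP$ is the class of retracts of the objects $F(Q)$ with $Q\in\cB$. The key input is the adjunction isomorphism $\cA(F(Q),-)\cong\cB(Q,U(-))$, which is natural and applies degreewise, hence induces an isomorphism of chain complexes $\cA(F(Q),X)\cong\cB(Q,UX)$ in $\mathrm{Ch}_{\geq0}(\Z)$ and, applied to $f$, an isomorphism $\cA(F(Q),f)\cong\cB(Q,Uf)$ of arrows.

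First I would remove the quantifier ``for all $P\in\cP$'' in favour of ``for all $F(Q)$''. Since $\cA(-,X)$ is contravariant, a retract $P\to F(Q)\to P$ exhibits $\cA(P,f)$ as a retract of $\cA(F(Q),f)$; as both quasi-isomorphisms and positive-degree surjections are closed under retracts, and each $F(Q)$ itself lies in $\cP$, the $\cP$-equivalence (resp.\ $\cP$-fibration) condition on $f$ is equivalent to its restriction to the objects $F(Q)$. Together with the adjunction isomorphism this yields: $f$ is a $\cP$-equivalence iff $\cB(Q,Uf)$ is a quasi-isomorphism for every $Q\in\cB$, and $f$ is a $\cP$-fibration iff $\cB(Q,Uf)$ is surjective in positive degrees for every $Q\in\cB$.

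Statement (2) is now immediate from the degreewise content of \textbf{Lemma~\ref{Yoneda game}}: in each positive degree, surjectivity of $\cB(Q,(Uf)_n)$ for all $Q$ is equivalent to $(Uf)_n$ being a split epimorphism, so the condition above says exactly that $Uf$ is a degreewise split epimorphism in positive degrees. For statement (1) it remains to establish the absolute characterization in $\mathrm{Ch}_{\geq0}(\cB)$: a map $g$ has ``$\cB(Q,g)$ a quasi-isomorphism for all $Q$'' iff $g$ is a chain homotopy equivalence. The direction $(\Leftarrow)$ is formal, since the additive functor $\cB(Q,-)$ sends a chain homotopy inverse of $g$, and the homotopies witnessing it, to the same data for $\cB(Q,g)$. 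For $(\Rightarrow)$ I would pass to the mapping cone $C=\mathrm{cone}(g)$, which remains non-negatively graded. As $\cB(Q,-)$ is additive it commutes with the cone, so $\cB(Q,C)\cong\mathrm{cone}(\cB(Q,g))$ is acyclic for every $Q$.

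The crux, and the step I expect to be the main obstacle, is then to show that a complex $C$ with $\cB(Q,C)$ acyclic for all $Q$ is chain contractible, after which a standard computation recovers a chain homotopy inverse of $g$ from a contraction of its cone. I would build a contracting homotopy $s_n\colon C_n\to C_{n+1}$ by induction on $n$: assuming $ds+sd=\mathrm{id}$ below degree $n$, the endomorphism $\phi_n:=\mathrm{id}_{C_n}-s_{n-1}d_n$ satisfies $d_n\phi_n=0$ (using the inductive hypothesis), so it is an $n$-cycle of the complex $\cB(C_n,C)$; the hypothesis $H_n(\cB(C_n,C))=0$, applied with the test object $Q=C_n$, then produces $s_n$ with $d_{n+1}s_n=\phi_n$, that is $d_{n+1}s_n+s_{n-1}d_n=\mathrm{id}_{C_n}$. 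The base case $n=0$ is the instance where $H_0(\cB(C_0,C))=0$ forces $d_1$ to be a split epimorphism, again by \textbf{Lemma~\ref{Yoneda game}}. This is precisely where the full force of the hypothesis, namely acyclicity of $\cB(Q,C)$ in every degree for the test objects $Q=C_n$, is consumed.
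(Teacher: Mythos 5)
Your proof is correct and follows the same route as the paper: part (2) is the identical adjunction-plus-Lemma~\ref{Yoneda game} argument, and part (1) reduces, as the paper does implicitly, to showing that the mapping cone of $Uf$ is contractible. The only difference is that the paper outsources that contractibility statement to \cite[Lemma~3.1(b)]{christensen2002quillen}, remarking only that the cone stays non-negatively graded, whereas you prove it directly; your inductive construction of the contracting homotopy, whose base case $s_{-1}=0$ is available precisely because the cone is bounded below, is exactly the content that makes the cited proof ``still valid here,'' so this is a welcome expansion rather than a divergence.
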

	
	\begin{proof}
		(1) The proof of \cite[Lemma~3.1~(b)]{christensen2002quillen} is still valid here since the cofiber $C$ of $Uf$ is given by $C_n=(UY)_n\oplus (UX)_{n-1}$ and so $C\in \mathrm{Ch}_{\geq0}(\cB)$. 
		
		\noindent (2) By \textbf{Proposition~\ref{cor6.4}}, $f : X \to Y$ in $\mathrm{Ch}_{\geq0}(\cA)$ is a $\cP$-fibration if and only if the bijection given by the adjunction $F \dashv U$
			\[\cA(FM, f)\cong\cB(M, Uf):\cB(M, UX)\to\cB(M, UY)\]
			is degreewise surjective in positive degrees for every $M \in \cB$ (trivial projective class). By \textbf{Lemma~\ref{Yoneda game}}, this is equivalent to $Uf:UX:\to UY$ being degreewise split epimorphism in positive degrees. 
	\end{proof}
	
	\begin{proposition}\label{h-mod}
		For $\cA$ a bicomplete abelian category, $\mathrm{Ch}_{\geq 0}(\cA)$ has a model structure given by the following.
		\begin{enumerate}
			\item A map $f$ is an weak equivalence if it is a chain homotopy equivalence.
			\item  A map $f$ is an fibration if it is a degreewise split epimorphism in positive degrees (not necessarily in degree zero).
			\item  A map $f$ is an cofibration if it is a degreewise split monomorphism.
		\end{enumerate}
		Moreover, every complex is fibrant and cofibrant.
	\end{proposition}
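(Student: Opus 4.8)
The plan is to exhibit this model structure as the instance of the Christensen--Hovey model structure of \textbf{Proposition~\ref{cor6.4}} associated to the \emph{trivial} projective class $(\cP,\cE)$ on $\cA$, where $\cP$ is the collection of all objects and $\cE$ the collection of all split epimorphisms. Since $(\cP,\cE)$ is a projective class (by the corresponding example) and $\cA$ is bicomplete, \textbf{Proposition~\ref{cor6.4}} immediately furnishes a model structure on $\mathrm{Ch}_{\geq 0}(\cA)$. It then only remains to check that the resulting weak equivalences, fibrations, and cofibrations coincide with the three classes in the statement, and to identify the fibrant and cofibrant objects.

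To translate the descriptions, I would apply \textbf{Lemma~\ref{lem3.1}} with $\cB=\cA$ and $U=F=\mathrm{id}_{\cA}$, which form an adjoint pair. The first point to observe is that the pullback of the trivial projective class on $\cA$ along the identity is again the trivial projective class: the retracts of $F(P)=P$ range over all objects of $\cA$, and a map $f$ lies in $\cE$ exactly when $U(f)=f$ is a split epimorphism. With this identification, the $\cP$ produced by \textbf{Lemma~\ref{lem3.1}} is the trivial projective class, so \textbf{Lemma~\ref{lem3.1}}(1) says that a weak equivalence ($\cP$-equivalence) is precisely a chain homotopy equivalence, since $Uf=f$; this gives item~(1). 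Likewise, \textbf{Lemma~\ref{lem3.1}}(2) says that a fibration ($\cP$-fibration) is precisely a degreewise split epimorphism in positive degrees, giving item~(2).

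For the cofibrations, \textbf{Proposition~\ref{cor6.4}}(3) describes them as the degreewise split monomorphisms with degreewise $\cP$-projective cokernel. Since $\cP$ is the collection of all objects of $\cA$, every object is $\cP$-projective and the cokernel condition is vacuous, so the cofibrations are exactly the degreewise split monomorphisms, which is item~(3). Finally, \textbf{Proposition~\ref{cor6.4}} asserts that every complex is fibrant and that a complex is cofibrant if and only if it is a complex of $\cP$-projectives; as every object is $\cP$-projective for the trivial projective class, every complex is cofibrant, establishing the last claim.

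The only nontrivial point is the weak-equivalence identification, namely that $\cP$-equivalences for the trivial class are exactly the chain homotopy equivalences. This is precisely what \textbf{Lemma~\ref{lem3.1}}(1) provides, relying in turn on the argument of \cite[Lemma~3.1(b)]{christensen2002quillen} adapted to the non-negatively graded setting, where the key verification is that the mapping cone of $Uf$ remains in $\mathrm{Ch}_{\geq 0}(\cB)$. Everything else in the proof is a direct unwinding of the definitions specialized to the trivial projective class, so I expect no further obstacles.
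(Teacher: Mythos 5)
Your proposal is correct and follows essentially the same route as the paper: it specializes \textbf{Proposition~\ref{cor6.4}} to the trivial projective class and invokes \textbf{Lemma~\ref{lem3.1}} with $U=F=\mathrm{id}_{\cA}$ to identify the weak equivalences and fibrations, with the cofibration and cofibrancy claims read off from the fact that every object is $\cP$-projective. Your write-up merely makes explicit the observation (left implicit in the paper) that the pullback of the trivial projective class along the identity is again the trivial projective class.
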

	
	\begin{proof}
		Parts~(1) and (2) are obtained by applying \textbf{Lemma~\ref{lem3.1}} with $U=\mathrm{id}_{\cA}:\cA\to \cA$. In other words, take the trivial projective class on $\cA$. 
		
		\noindent Part~(3) is given by \textbf{Proposition~\ref{cor6.4}} and so a map $f:X\to Y$ in $\mathrm{Ch}_{\geq 0}(\cA)$ is an h-cofibration if and only  if it is a degreewise split monomorphism, and every complex is fibrant and cofibrant, since all objects in $\cA$ are $\cP$-projectives.
	\end{proof}
	
	The model structure on $\mathrm{Ch}_{\geq 0}(\cA)$ defined in \textbf{Proposition \ref{h-mod}} is called the \linebreak
	\Def{Hurewicz model structure}, in light of the following characterization of the cofibrations and fibrations.
		
	\begin{proposition}\text{\ }
		\begin{enumerate}
			\item A map $i : A \to X$ in $\mathrm{Ch}_{\geq 0}(\cA)$ is a cofibration if and only if it satisfies the homotopy extension property (HEP).
			\item A map $p : E \to B$ in $\mathrm{Ch}_{\geq 0}(\cA)$ is a fibration if and only if it satisfies the homotopy lifting property (HLP).
		\end{enumerate}
	\end{proposition}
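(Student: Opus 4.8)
The plan is to realize both equivalences as instances of a single algebraic principle: a degreewise split monomorphism with contractible cokernel admits a chain retraction, and dually a degreewise split epimorphism with contractible kernel admits a chain section. First I would fix the cylinder and path objects coming from the enrichment of \textbf{Lemma~\ref{geq0monoidal}}: let $I\in\mathrm{Ch}_{\geq 0}(\Z)$ be the interval complex with $I_0=\Z\langle v_0\rangle\oplus\Z\langle v_1\rangle$, $I_1=\Z\langle e\rangle$ and $de=v_1-v_0$, so that for $X\in\mathrm{Ch}_{\geq 0}(\cA)$ the tensoring $X\otimes I$ is a cylinder object and the cotensoring $X^I=\mathrm{\underline{Hom}}_{\mathrm{Ch}_{\geq 0}(\Z)}(I,X)$ is a path object, with maps out of $X\otimes I$ and into $X^I$ classifying chain homotopies. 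With these, the HEP for $i\colon A\to X$ says exactly that every map out of the pushout $P:=X\cup_A(A\otimes I)$ extends along the canonical map $j\colon P\to X\otimes I$, and the HLP for $p\colon E\to B$ says exactly that every map into the pullback $N:=E\times_B B^I$ lifts along the map $p_*\colon E^I\to N$ induced by $\mathrm{ev}_0$ and $p$.

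Next I would reduce each property to a single retraction or section by the universal (Yoneda) example. Taking the identity of $P$ as the test datum shows that $i$ has the HEP if and only if $j$ admits a chain retraction; dually, taking the identity of $N$ shows that $p$ has the HLP if and only if $p_*$ admits a chain section. One direction of each equivalence with the model structure is then immediate. A chain retraction of $j$ is in particular a degreewise retraction, which forces $i$ to be a degreewise split monomorphism, hence a cofibration by \textbf{Proposition~\ref{h-mod}}; and a chain section of $p_*$ is in particular a degreewise section, forcing $p_*$ to be a degreewise split epimorphism, which by the explicit form of the cotensor forces $p$ to be a degreewise split epimorphism in positive degrees, hence a fibration. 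The role of the good truncation $\tau_{\geq 0}$ defining $E^I$ is precisely to remove any condition in degree $0$, which is why only positive degrees appear; the lifting is visible in the homotopy component $E_{n+1}$ of $(E^I)_n$, which records $p_{n+1}$ for each $n\geq 0$.

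For the converse directions I would prove the algebraic principle itself. If $i$ is a degreewise split monomorphism, then so is $j$, and its cokernel is isomorphic to $W\otimes(I/\langle v_0\rangle)$, where $W$ is a graded complement of $A$ in $X$; since $I/\langle v_0\rangle$ is chain contractible, so is this cokernel. Writing $X\otimes I\cong P\oplus C$ as graded objects with $C$ the contractible cokernel, the differential is upper triangular with off-diagonal term a degree $-1$ cycle $\gamma$ in the complex $\mathrm{\underline{Hom}}_{\mathrm{Ch}_{\geq 0}(\Z)}(C,P)$, and producing a chain retraction amounts to solving $\partial\rho=\gamma$ for a degree $0$ element $\rho$. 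Because $C$ is contractible, $\mathrm{\underline{Hom}}_{\mathrm{Ch}_{\geq 0}(\Z)}(C,P)$ is acyclic, so the cycle $\gamma$ is a boundary and $\rho$ exists. The dual computation, with the contractible kernel of $p_*$ in place of the cokernel of $j$, produces the section, and establishes the remaining implications.

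The main obstacle is this last step, namely upgrading the evident degreewise splittings to honest chain maps: the degreewise data is formal, but the differentials twist the summands, and the retraction (resp. section) exists only because the complementary complex is contractible, forcing the relevant hom-complex to be acyclic so that the obstruction cycle $\gamma$ vanishes in homology. A secondary bookkeeping obstacle is the fibration case, where one must track the truncation $\tau_{\geq 0}$ in $E^I$ carefully enough to confirm that the section condition is exactly split surjectivity of $p$ in positive degrees and imposes nothing in degree $0$; this is the algebraic shadow of the fact, recorded in \textbf{Proposition~\ref{h-mod}}, that fibrations need not be split epimorphisms in degree $0$.
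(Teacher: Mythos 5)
Your proposal is correct, but it takes a genuinely different route from the paper. The paper proves part (1) by citing the unbounded-complex result \cite[Proposition~18.3.6]{may2011more} and observing that the mapping cylinder $Mi = X\cup_i A\otimes I$ stays non-negatively graded; it proves the forward direction of part (2) by lifting against the acyclic cofibration $i_0\colon A\to A\otimes I$ using the already-established model structure, and the converse by an explicit degreewise computation extracting sections $s_n$ of $p_n$ ($n>0$) from a section of $(\mathrm{ev}_0,p_*)\colon E^I\to \tau_{\geq 0}(Np)$. You instead derive all four implications from one self-contained lemma (a degreewise split monomorphism with contractible cokernel admits a chain retraction, and dually), applied after the same universal-example reduction that the paper uses implicitly; your treatment of HLP $\Rightarrow$ fibration coincides with the paper's degreewise extraction, but your cofibration argument and your fibration $\Rightarrow$ HLP argument replace the external citation and the model-category lifting axiom, respectively, with the obstruction-cycle computation. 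What your route buys is uniformity and independence from both \cite{may2011more} and the Christensen--Hovey machinery, at the cost of having to verify the contractibility claims by hand. Two small points to tighten: the obstruction $\gamma$ is a cycle of degree $-1$, so it lives in the \emph{unbounded} hom complex $\mathrm{\underline{Hom}}_{\mathrm{Ch}(\Z)}(C,P)$ rather than its good truncation $\mathrm{\underline{Hom}}_{\mathrm{Ch}_{\geq 0}(\Z)}(C,P)$, which has nothing in degree $-1$ (contractibility of $C$ still makes $\gamma$ a boundary, so the argument is unaffected); and in the fibration case the degree-$0$ verification that $(\mathrm{ev}_0,p_*)$ is split epic with contractible kernel when $p$ is split only in positive degrees genuinely uses the path condition $de'=e_1-e_0$ together with a splitting of $p_1$ --- you have correctly flagged this as the place where care is needed, and the computation does go through.
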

	
	\begin{proof} (1) The proof of \cite[Proposition~18.3.6]{may2011more} applies here, since any degreewise split monomorphism $i:A\to X$ of non-negatively graded chain complexes is an r-cofibration when seen as a map of unbounded complexes and its mapping cylinder $Mi=X\cup_i A\otimes I$, computed in $\mathrm{Ch}(\cA)$, is a non-negatively graded chain complex. 
		
	\noindent (2) $(\implies)$ Assume that a map $p : E \to B$ in $\mathrm{Ch}_{\geq 0}(\cA)$ is a fibration. As in \linebreak
	\mbox{\cite[Proposition~18.3.6]{may2011more}}, since $i_0:A\to A\otimes I$ is an acyclic cofibration, by  \textbf{Proposition~\ref{h-mod}}, the lifting property of the model structure gives that $p$ satisfies the HLP. 
	
	\noindent $(\impliedby)$ Assume that a map $p : E \to B$ in $\mathrm{Ch}_{\geq 0}(\cA)$ satisfies the HLP. Then the map \linebreak 
	$(\mathrm{ev}_0,p_*):E^I\to \tau_{\geq 0}(Np)$ has a section $\sigma:\tau_{\geq 0}(Np)\to E^I$ 
				between the mapping cocylinder $\tau_{\geq 0}(Np)=\tau_{\geq 0}(E\times_B B^I)$ and the path space  $E^I$, where $\tau_{\geq 0}: \mathrm{Ch}(\cA)\to \mathrm{Ch}_{\geq 0}(\cA)$ is the good truncation functor. The map $(\mathrm{ev}_0,p_*)$ at any degree $n> 0$ is given by
				\begin{equation*}
					\xymatrix@C+3pc{ 
						(E^I)_n  \ar[r]^-{(\mathrm{ev}_0,p_*)} \ar[d]^-{\cong} & (\tau_{\geq 0}(E\times_B B^I))_n \ar[d]^-{\cong} \\
						E_n\oplus E_n\oplus E_{n+1} \ar[r]^-{(\mathrm{Id},p_n,p_{n+1})} &  E_n\oplus B_n\oplus B_{n+1}. \vspace{.01cm}
					} 
				\end{equation*}
				For $n > 0$, the map $s_n : B_n\to E_n$ given by the composition
				\begin{equation*}
					\xymatrix{ 
						B_n  \ar@{^{(}->}[r] \ar@/^3pc/[rrr]_-{s_n} & (E\times_B B^I)_n \ar[r]^-{\sigma_n} \ar[d]^-{\cong} & (E^I)_n \ar[r]^-{(\mathrm{ev}_1)_n} \ar[d]^-{\cong} & E_n \\
						B_n \ar@{^{(}->}[r] & E_n\oplus B_n\oplus B_{n+1} \ar[r]^-{\sigma_n} & E_n\oplus E_n\oplus E_{n+1} \ar[r]^-{\mathrm{proj}_2} & E_n \\
						b \ar@{|->}[r] & (0,b,0) \ar@{|->}[r] & (e_1,e_2,e_3) \ar@{|->}[r] & e_2
					} 
				\end{equation*}
			\vspace{-0.1cm}
				is a section of $p_n : E_n \to B_n$. Indeed, for any $(e,b_1,b_2)\in E_n\oplus B_n\oplus B_{n+1}$,
				\vspace{-0.1cm}
				\begin{align}\label{sec}
					\small (e,b_1,b_2) = (\mathrm{ev}_0,p_*)_n\sigma_n(e,b_1,b_2) = (\mathrm{ev}_0,p_*)_n(e_1,e_2,e_3) & = (e_1,p_n(e_2),p_{n+1}(e_3)) \nonumber\\
					\implies p_n(e_2) & = b_1
				\end{align}
			\vspace{-0.1cm}
				and so for any $b\in B_n$,
				\vspace{-0.1cm}
				\[p_ns_n(b)=p_n\mathrm{proj}_2\sigma_n(0,b,0)=p_n\mathrm{proj}_2(e_1,e_2,e_3)=p_n(e_2)=b,\ \text{by}\ (\ref{sec}).  \qedhere\]
	\end{proof}
	
	\begin{example}
		The brutal truncation $q:C\to C/C_0$ is a fibration in the model category defined in \textbf{Proposition \ref{h-mod}}, hence it satisfies the HLP.  Indeed, considering the right hand side diagram with $f\simeq g$ with the homotopy $H$, there is the lift $\widetilde{H}$ of $H$ such that \linebreak
		$\widetilde{f}\simeq \widetilde{g}$ with the homotopy $\widetilde{H}$.
		
		\begin{center}
			\begin{tabular}{ccc}
				$\xymatrix{
					&& \\
					&& \\
					&& C \ar[dd]^-{q}\\
					&&\\
					A 
					\ar[uurr]^-{\widetilde{f}}
					\ar@/^1.5pc/[rr]_{\quad}^{f}="1" 
					\ar@/_1.5pc/[rr]_{g}="2" 
					&&  C/C_0
					\ar@{}"1";"2"|(0.135){\,}="7" 
					\ar@{}"1";"2"|(0.875){\,}="8" 
					\ar@{=>}|-{H}"7" ;"8"
				} $  & $\xymatrix{
					& \\
					& \\
					& \\
					& \implies
				}$  &
				$\xymatrix{\cdots \ar[d] & \cdots \ar[d] & \cdots \ar[d] \\
					C_2 \ar[d]
					A_2 \ar[ur]^-{\widetilde{H}_2} \ar[d] \ar[r]^-{\widetilde{f}_2,\widetilde{g}_2} & C_2 \ar[d] \ar[r]^q & C_2 \ar[d] \\
					A_1 \ar[ur]^-{\widetilde{H}_1}\ar[d] \ar[r]^-{\widetilde{f}_1,\widetilde{g}_1} & C_1 \ar[d] \ar[r]^q & C_1 \ar[d] \\
					A_0 \ar[ur]^-{\widetilde{H}_0}\ar[d] \ar[r]^-{\widetilde{f}_0,\widetilde{g}_0} & C_0 \ar[d] \ar[r] & 0 \ar[d] \\
					0  \ar[ur]^-{\widetilde{H}_{-1}} & 0 & 0 
				}$
			\end{tabular}
		\end{center}
			
		We want to lift the homotopy $H$ and obtain $\partial \widetilde{H}+\widetilde{H}\partial = \widetilde{g}-\widetilde{f}$. For $n\geq 1$, we have $\widetilde{H}_n=H_n$ and $\widetilde{g}_n=g_n$. For $n=0$, we have
				$\partial \widetilde{H}_0+\widetilde{H}_{-1}\partial  = \widetilde{g}_0-\widetilde{f}_0 $. So
				$\widetilde{g}_0  = \widetilde{f}_0 + \partial H_0$, with $\widetilde{H}_0 = H_0$ since $\widetilde{H}_{-1}=0$.
	\end{example}
	
	\begin{proposition}\label{monoidalprop}\text{\ }
	\begin{enumerate}
		\item For a commutative ring $R$, the Hurewicz model structure on $\mathrm{Ch}_{\geq 0}(R)$ is monoidal.
		\item For an arbitrary ring $R$, the Hurewicz model structure on $\mathrm{Ch}_{\geq 0}(R)$ is enriched over $\mathrm{Ch}_{\geq 0}(\Z)$.
	\end{enumerate}
	\end{proposition}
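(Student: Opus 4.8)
The plan is to verify, for part~(1), the two compatibility conditions in the definition of a monoidal model category, and for part~(2) the two conditions of \textbf{Definition~\ref{def-emc}}. In both cases the unit axiom is immediate: by \textbf{Proposition~\ref{h-mod}} every object of $\mathrm{Ch}_{\geq 0}(R)$ is cofibrant, so the tensor unit $R[0]$ is already cofibrant and we may take the cofibrant replacement $q$ to be the identity, whereupon the map $X\otimes Q\unit\to X$ is an isomorphism, hence a weak equivalence. The whole content is therefore the (external) pushout-product axiom.

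For the pushout-product of two cofibrations I would argue degreewise. Recall from \textbf{Proposition~\ref{h-mod}} that the cofibrations are precisely the degreewise split monomorphisms. I would first show that the pushout-product $i\square k$ of two degreewise split monomorphisms $i:X\to Y$ and $k:V\to W$ is again a degreewise split monomorphism. This is the standard direct-summand computation: a degreewise splitting of $i$ and of $k$ exhibits, in each degree, $Y\otimes W$ as a direct sum of four terms built from $X$, a complement of $X$ in $Y$, $V$, and a complement of $V$ in $W$; the pushout $(X\otimes W)\amalg_{X\otimes V}(Y\otimes V)$ is the sub-direct-sum of the three terms involving the base, and $i\square k$ is the inclusion of that summand. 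Since this splitting is degreewise, $i\square k$ is a degreewise split monomorphism, that is a cofibration.

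For the acyclic case I would reduce to the unbounded setting of \cite[Chapter~18]{may2011more}. The tensor product on $\mathrm{Ch}_{\geq 0}(R)$ is the restriction of that on $\mathrm{Ch}(R)$ from \textbf{Definition~\ref{tenCh}}, and the tensor product of non-negatively graded complexes is automatically non-negatively graded; moreover a map of non-negatively graded complexes is a chain homotopy equivalence, respectively a degreewise split monomorphism, in $\mathrm{Ch}_{\geq 0}(R)$ if and only if it is so in $\mathrm{Ch}(R)$, since the homotopies and homotopy inverses involved stay in non-negative degrees. Hence the fully faithful inclusion $\mathrm{Ch}_{\geq 0}(R)\hookrightarrow\mathrm{Ch}(R)$ carries cofibrations, acyclic cofibrations and pushout-products to their unbounded counterparts. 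Since May and Ponto prove that the pushout-product of an acyclic h-cofibration with an h-cofibration is an acyclic h-cofibration in $\mathrm{Ch}(R)$, and since $i\square k$ again lands in $\mathrm{Ch}_{\geq 0}(R)$, the acyclic half of the pushout-product axiom follows, completing part~(1).

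For part~(2) I would run the same two steps for the external pushout-product. By \textbf{Lemma~\ref{geq0monoidal}} the tensoring of an object $X$ of $\mathrm{Ch}_{\geq 0}(R)$ with an object $K$ of $\mathrm{Ch}_{\geq 0}(\Z)$ is computed by the same formula with $R=\Z$, so the degreewise direct-summand argument shows that the external pushout-product of a cofibration in $\mathrm{Ch}_{\geq 0}(R)$ with a cofibration in $\mathrm{Ch}_{\geq 0}(\Z)$ is a cofibration in $\mathrm{Ch}_{\geq 0}(R)$, and the acyclic case again transfers from the unbounded statement; the unit axiom is trivial as before. The step I expect to be the main obstacle is the acyclic case of the pushout-product, where one must know that tensoring a strong deformation retract inclusion with an arbitrary cofibration preserves the chain homotopy equivalence; I would handle this by the reduction to \cite[Chapter~18]{may2011more} just described, rather than transporting the retraction and the homotopy witnessing acyclicity of $i$ by hand, so as to avoid redoing the homotopy bookkeeping.
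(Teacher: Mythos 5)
Your proof is correct and takes essentially the same approach as the paper: the paper's entire argument is the observation that May--Ponto's proof for $\mathrm{Ch}(R)$ restricts verbatim to $\mathrm{Ch}_{\geq 0}(R)$ because all objects in their diagrams remain non-negatively graded, which is exactly your reduction for the acyclic case (the part you rightly identify as the real content). Your additional direct degreewise-splitting argument for the pushout-product of two plain cofibrations is a correct, slightly more self-contained treatment of that half, but does not change the substance of the proof.
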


	\begin{proof}
		The proof follows from its analogue on $\mathrm{Ch}(R)$ \cite[End of \S18.3]{may2011more}. If we take maps $i:A\to X$ in $\mathrm{Ch}_{\geq 0}(R)$ and $j:Y\to Z$ in $\mathrm{Ch}_{\geq 0}(R)$ (or in $\mathrm{Ch}_{\geq 0}(\Z)$), then all objects in the diagrams in that proof are non-negatively graded, that is, in $\mathrm{Ch}_{\geq 0}(R)$ or $\mathrm{Ch}_{\geq 0}(\Z)$.
	\end{proof}

\section{Relation with other model structures on \texorpdfstring{$\mathrm{Ch}_{\geq 0}(R)$}{Ch0(R)}}\label{relation}
	
	Given the Hurewicz model structure described in the previous section, by \linebreak \mbox{\cite[Theorem~2.1]{cole2006mixing}} there is a new model structure on non-negatively graded chain complexes $\mathrm{Ch}_{\geq0}(R)$ given by the following proposition.
	
	\begin{proposition}
		There is a \textbf{mixed model structure} on $\mathrm{Ch}_{\geq 0}(R)$ with
		\begin{enumerate}
			\item m-weak equivalences: quasi-isomorphisms,
			\item m-fibrations: degreewise split epimorphisms in positive degrees (not necessarily in degree zero),
			\item m-cofibrations: maps satisfying the left lifting property (LLP) with respect to acyclic fibrations ($\textbf{F}\cap\textbf{W}$).
		\end{enumerate}
	\end{proposition}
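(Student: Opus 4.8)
The plan is to realize this as a direct instance of Cole's mixing theorem \cite[Theorem~2.1]{cole2006mixing}. Recall that the mixing theorem takes as input two model structures on the same category, a ``q-model structure'' with weak equivalences $\cW_q$ and fibrations $\cF_q$, and an ``h-model structure'' with weak equivalences $\cW_h$ and fibrations $\cF_h$, subject to the two comparison conditions $\cW_h\subseteq \cW_q$ and $\cF_h\subseteq \cF_q$. It then produces the \emph{mixed} model structure whose weak equivalences are the members of $\cW_q$, whose fibrations are the members of $\cF_h$, and whose cofibrations are exactly the maps with the LLP against the acyclic fibrations. I would take for the q-model structure the projective (Quillen) model structure on $\mathrm{Ch}_{\geq 0}(R)$, whose weak equivalences are the quasi-isomorphisms and whose fibrations are the degreewise epimorphisms in positive degrees, and for the h-model structure the Hurewicz model structure of \textbf{Proposition~\ref{h-mod}}, whose weak equivalences are the chain homotopy equivalences and whose fibrations are the degreewise split epimorphisms in positive degrees.

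The substance of the proof is then to verify the two comparison conditions. For $\cW_h\subseteq\cW_q$, I would note that a chain homotopy equivalence induces an isomorphism on homology in each degree, hence is a quasi-isomorphism; this is the standard observation that homotopic chain maps induce the same map on homology. For $\cF_h\subseteq\cF_q$, I would use the elementary fact that a split epimorphism is in particular an epimorphism, applied degreewise: a map that is a degreewise split epimorphism in positive degrees is a fortiori a degreewise epimorphism in positive degrees. Both structures impose their fibration condition only in positive degrees, so the inclusion matches up cleanly with no degree-zero discrepancy.

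With both comparison conditions in hand, Cole's theorem applies verbatim and yields the mixed model structure. Its weak equivalences are the members of $\cW_q$, namely the quasi-isomorphisms, giving item~(1); its fibrations are the members of $\cF_h$, namely the degreewise split epimorphisms in positive degrees, giving item~(2); and its cofibrations are characterized by the LLP against the acyclic fibrations, giving item~(3). The one point to be careful about is simply that the projective model structure on the \emph{non-negatively graded} category $\mathrm{Ch}_{\geq 0}(R)$ has, like the Hurewicz model structure, its fibrations described only in positive degrees, so that the two fibration classes are genuinely comparable; once this is observed, there is no real obstacle, as everything reduces to the two one-line inclusions above.
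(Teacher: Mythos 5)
Your proposal is correct and is exactly the paper's argument: the proposition is obtained by applying Cole's mixing theorem \cite[Theorem~2.1]{cole2006mixing} to the projective (Quillen) and Hurewicz model structures on $\mathrm{Ch}_{\geq 0}(R)$, the paper merely leaving implicit the two comparison inclusions that you spell out. Your verification of $\cW_h\subseteq\cW_q$ and $\cF_h\subseteq\cF_q$, including the observation that both fibration classes are only constrained in positive degrees, is accurate and complete.
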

	
	\begin{remark}\label{mmonoidal}
		Since the Quillen and Hurewicz model categories, respectively denoted by $\mathrm{Ch}_{\geq 0}(R)_q$ \cite[Remark~1.6]{jardine2003presheaves} and $\mathrm{Ch}_{\geq 0}(R)_h$ are monoidal, then the above mentioned mixed model category, denoted by $\mathrm{Ch}_{\geq 0}(R)_m$ is also monoidal, by \mbox{\cite[Proposition~6.6]{cole2006mixing}}.
	\end{remark}
	
	\begin{lemma}\label{q2h-we}
		In $\mathrm{Ch}_{\geq 0}(R)$ (or in $\mathrm{Ch}(R)$), if a map $f$ is a q-cofibration and a q-weak equivalence, i.e. a quasi-isomorphism, then $f$ is a h-weak equivalence, i.e. a chain homotopy equivalence.
	\end{lemma}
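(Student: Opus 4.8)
The plan is to work directly with the cofiber of $f$ and produce an explicit chain homotopy inverse. First I would record what a q-cofibration looks like: by the description of Quillen's projective model structure recalled in the introduction, a q-cofibration $f:A\to X$ in $\mathrm{Ch}_{\geq 0}(R)$ is a degreewise monomorphism whose cokernel $C:=\mathrm{coker}(f)$ is degreewise projective (in the unbounded case $\mathrm{Ch}(R)$, $C$ is cofibrant, i.e.\ dg-projective, hence again degreewise projective). Since every short exact sequence of modules with projective cokernel splits, this gives a degreewise split short exact sequence of chain complexes
\begin{equation*}
0 \to A \xrightarrow{\ f\ } X \to C \to 0 .
\end{equation*}

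Next I would bring in the hypothesis that $f$ is a quasi-isomorphism. The long exact sequence in homology attached to the short exact sequence above then forces $H_*(C)=0$, so $C$ is acyclic. The key homological input is that a q-cofibrant acyclic complex is contractible: a non-negatively graded acyclic complex of projectives is null-homotopic (standard, by induction using projectivity of the $C_n$), and in the unbounded case a dg-projective acyclic complex is likewise null-homotopic. I would fix a contracting homotopy $k$, that is a degree $+1$ map with $d_C k + k d_C = \mathrm{id}_C$.

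Finally I would assemble the homotopy inverse from the splitting and from $k$. Writing $X_n\cong A_n\oplus C_n$ via the degreewise splitting, $f$ becomes the inclusion $a\mapsto(a,0)$ and the differential is triangular, $d_X=\left(\begin{smallmatrix} d_A & h\\ 0 & d_C\end{smallmatrix}\right)$ for a degree $-1$ map $h:C\to A$ satisfying $d_A h=-h d_C$ (which is exactly $d_X^2=0$). Then $g(a,c):=a-hk(c)$ defines a chain map $g:X\to A$ with $gf=\mathrm{id}_A$, and $s(a,c):=(0,kc)$ defines a homotopy witnessing $d_X s + s d_X = \mathrm{id}_X - fg$; both identities fall out of $d_A h=-h d_C$ and $d_C k + k d_C=\mathrm{id}_C$, so I would only sketch them rather than grind through. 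This shows $f$ is a chain homotopy equivalence, i.e.\ an h-weak equivalence.

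The step I expect to be the main obstacle is the contractibility of the cofiber $C$. In the bounded-below setting this is clean, but some care is needed in the unbounded case: \emph{degreewise projective and acyclic} does not by itself imply contractible, and one must genuinely use that a q-cofibrant cokernel is dg-projective. Everything downstream of contractibility is the formal splitting computation above, and the reduction to $C$ via the long exact sequence is routine.
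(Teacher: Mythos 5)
Your argument is correct, and the skeleton is the same as the paper's: identify the cokernel $C$ of the q-cofibration as a degreewise projective complex fitting into a degreewise split short exact sequence, use the long exact sequence to see $C$ is acyclic, show $C$ is contractible, and conclude that $f$ is a chain homotopy equivalence. Where you diverge is in how the last two steps are carried out. The paper gets contractibility of $C$ by observing that $C$ is fibrant and cofibrant and weakly contractible, hence contractible by Whitehead's theorem in model categories, and then cites \cite[Lemma~18.2.8]{may2011more} to pass from a contractible cofiber of a degreewise split monomorphism to a chain homotopy equivalence. You instead prove contractibility by the standard induction for a bounded-below acyclic complex of projectives (correctly flagging that in the unbounded case one must use dg-projectivity of the cofibrant cokernel, not mere degreewise projectivity), and you replace the citation by an explicit construction: writing $X_n\cong A_n\oplus C_n$ with triangular differential $d_X=\left(\begin{smallmatrix} d_A & h\\ 0 & d_C\end{smallmatrix}\right)$ and contracting homotopy $k$ on $C$, the formulas $g(a,c)=a-hk(c)$ and $s(a,c)=(0,kc)$ do satisfy $gf=\mathrm{id}_A$ and $d_Xs+sd_X=\mathrm{id}_X-fg$ (I checked: both identities reduce to $d_Ah=-hd_C$ and $d_Ck+kd_C=\mathrm{id}_C$). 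Your route is self-contained and produces the homotopy inverse explicitly, at the cost of the splitting bookkeeping; the paper's route is shorter but leans on model-categorical Whitehead and an external lemma. Either is acceptable.
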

	
	\begin{proof}
		Let $\xymatrix{ f:A\ar@{^{(}->}[r]^-{\sim} & X}$ be an acyclic q-cofibration. We have the following split short exact sequence of graded $R$-modules
		\begin{equation*}
			\xymatrix{ & & & \text{coker}(f)\ar@{=}[d] & \\
				0\ar[r] & A\ar[r]^-{\sim} & X\ar[r] & X/A \ar[r] & 0}
		\end{equation*}
		since $\text{coker}(f)=X/A$ is degreewise projective. Since $f$ is a quasi-isomorphism, we have $H_*(X/A)=0$ by the long exact sequence, i.e. $X/A$ is weakly contractible. Also, $X/A$ is a fibrant and cofibrant chain complex, and so it is contractible by Whitehead's Theorem in model categories \cite[Proposition~1.2.8]{hovey1999model}. Therefore, by \cite[Lemma~18.2.8]{may2011more}, $f$~is a chain homotopy equivalence.
	\end{proof}
	
	\begin{lemma}\label{m2h-we}
		In $\mathrm{Ch}_{\geq 0}(R)$ (or in $\mathrm{Ch}(R)$), if a map $j$ is a m-cofibration and an m-weak equivalence, i.e.\ a quasi-isomorphism, then $j$ is an h-weak equivalence, i.e.\ a chain homotopy equivalence.
	\end{lemma}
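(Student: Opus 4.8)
The plan is to identify the acyclic cofibrations of the mixed structure with those of the Hurewicz structure, after which the statement is immediate. The underlying observation is that the mixed and Hurewicz model structures share the same class of fibrations, and a class of acyclic cofibrations is determined by the fibrations alone via the lifting axioms; so the two structures must have the same acyclic cofibrations.

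Concretely, I would argue as follows. Since $j$ is both an m-cofibration and an m-weak equivalence, it is an acyclic cofibration of $\mathrm{Ch}_{\geq 0}(R)_m$, and hence, by the lifting axioms of a model category, $j$ has the left lifting property against every m-fibration. By construction of the mixed model structure, the m-fibrations are exactly the degreewise split epimorphisms in positive degrees, which by \textbf{Proposition~\ref{h-mod}} are precisely the h-fibrations. Therefore $j$ has the left lifting property against every h-fibration. By the standard characterisation of acyclic cofibrations as precisely the maps having the left lifting property against all fibrations, $j$ is then an acyclic cofibration of the Hurewicz structure $\mathrm{Ch}_{\geq 0}(R)_h$. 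In particular $j$ is an h-weak equivalence, that is, a chain homotopy equivalence, as claimed.

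The only genuine subtlety is to resist the natural but flawed attempt to reduce to \textbf{Lemma~\ref{q2h-we}} by factoring $j$ in the projective (Quillen) structure as a q-cofibration followed by a q-acyclic fibration and then exhibiting $j$ as a retract of the q-cofibration. This fails because a q-acyclic fibration need not be a degreewise split epimorphism (its kernel need not be degreewise projective), so it is not in general an m-acyclic fibration, and the m-cofibration $j$ has no reason to lift against it; the retract argument then collapses. The clean route above sidesteps this entirely: once one notes that the mixed and Hurewicz structures have identical fibrations, the coincidence of their acyclic cofibrations is purely formal, and the lemma follows with no computation.
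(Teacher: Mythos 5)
Your argument is correct, and it is genuinely different from the one in the paper. You observe that the mixed and Hurewicz structures share the same fibrations ($F_m=F_h$, both being the degreewise split epimorphisms in positive degrees), so the acyclic cofibrations of the two structures, each being characterized as the maps with the left lifting property against all fibrations of the respective structure, must coincide; hence an acyclic m-cofibration is an acyclic h-cofibration and in particular an h-equivalence. This is a purely formal consequence of Cole's mixing construction and requires no information about what m-cofibrations look like. The paper instead invokes the explicit characterization of m-cofibrations from \cite[Theorem~17.3.5]{may2011more}: an m-cofibration $j$ factors as $j=f\circ i$ with $i$ a q-cofibration and $f$ an h-equivalence; two applications of 2-out-of-3 together with \textbf{Lemma~\ref{q2h-we}} then show $j$ is an h-equivalence. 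Your route is shorter and more general (it works for any mixed model structure, with no appeal to \textbf{Lemma~\ref{q2h-we}}), while the paper's route makes the relationship with the projective structure explicit and reuses the preceding lemma, which fits the expository arc of Section~\ref{relation}. Your closing remark about the flawed retract argument is a fair warning about a third, genuinely problematic approach, but note that it is not the approach the paper takes.
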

	
	\begin{proof}
		Let $\xymatrix{j:A\ar@{^{(}->}[r]^-{\sim} & X}$ be an acyclic m-cofibration, i.e. an m-cofibration and a quasi-isomorphism. As an m-cofibration, by \cite[Theorem~17.3.5]{may2011more}, $j$ factors as follows
		\begin{equation*}
			\xymatrix{ A\ar@{^{(}->}[rd]_-i \ar@{^{(}->}[rr]_-{\simeq}^-j & & X \\
				& X' \ar[ru]^-{\simeq}_-f & },
		\end{equation*}
		where $i$ is a q-cofibration and $f$~is a chain homotopy equivalence. Since $j$~is also a quasi-isomorphism, then by 2-out-of-3, $i$~is a quasi-isomorphism too. Hence by \textbf{Lemma~\ref{q2h-we}}, $i$~is a chain homotopy equivalence and so by 2-out-of-3, $j$~is a chain homotopy equivalence too.
	\end{proof}
	
	We have the following relations between three of the model structures on $\mathrm{Ch}_{\geq 0}(R)$.
	\begin{proposition}\label{qhm}
		For a commutative ring $R$,
		\begin{enumerate}
			\item $\mathrm{Ch}_{\geq 0}(R)_h$ is enriched over $\mathrm{Ch}_{\geq 0}(R)_q$,
			\item  $\mathrm{Ch}_{\geq 0}(R)_h$ is enriched over $\mathrm{Ch}_{\geq 0}(R)_m$,
			\item $\mathrm{Ch}_{\geq 0}(R)_m$ is enriched over $\mathrm{Ch}_{\geq 0}(R)_q$.
		\end{enumerate}
	\end{proposition}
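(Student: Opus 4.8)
The plan is to read each assertion through Definition~\ref{def-emc}: using the self-enrichment of $\mathrm{Ch}_{\geq 0}(R)$ afforded by its closed symmetric monoidal structure (Lemma~\ref{geq0monoidal}(1)), statement $(n)$ says that the model structure named first is a $\cV$-enriched model category where $\cV$ is the \emph{same} category carrying the model structure named second. Since the tensor $\otimes$, the unit $R[0]$ and the internal hom are shared by all three structures, I only have to verify the external pushout-product axiom and the unit axiom, and both reduce to comparing cofibrations, acyclic cofibrations and cofibrant objects across the structures.

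First I would record the inclusions of cofibration classes. A $q$-cofibration is a degreewise monomorphism with degreewise projective cokernel, and a degreewise short exact sequence with projective quotient splits degreewise, so a $q$-cofibration is a degreewise split monomorphism, i.e.\ an $h$-cofibration. The two remaining inclusions I would obtain by comparing trivial fibrations and passing to left lifting properties: an $h$-trivial fibration is a degreewise split epimorphism in positive degrees that is a chain homotopy equivalence, hence a quasi-isomorphism, so it is an $m$-trivial fibration; an $m$-trivial fibration is a degreewise (split) epimorphism in positive degrees that is a quasi-isomorphism, so it is a $q$-trivial fibration. Since cofibrations are the maps with the left lifting property against trivial fibrations, the containments $\{h\text{-trivial fib.}\}\subseteq\{m\text{-trivial fib.}\}\subseteq\{q\text{-trivial fib.}\}$ give
\[
\{q\text{-cofibrations}\}\subseteq\{m\text{-cofibrations}\}\subseteq\{h\text{-cofibrations}\}.
\]
For the acyclic cofibrations I would invoke the preceding lemmas: by Lemma~\ref{q2h-we} and Lemma~\ref{m2h-we}, a $q$-acyclic or $m$-acyclic cofibration is a chain homotopy equivalence, so together with the display above it is an $h$-acyclic cofibration; and a $q$-acyclic cofibration, being an $m$-cofibration whose underlying quasi-isomorphism is an $m$-weak equivalence, is an $m$-acyclic cofibration.

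With these inclusions, each part becomes a formal consequence of the monoidality already proved for the target structure. For (1) and (2) the target is $\mathrm{Ch}_{\geq 0}(R)_h$, monoidal by Proposition~\ref{monoidalprop}(1): given an $h$-cofibration $i$ and a $q$- (resp.\ $m$-) cofibration $k$, the inclusions make $k$ an $h$-cofibration, so $i\square k$ is an $h$-cofibration, acyclic whenever $i$ is $h$-acyclic or $k$ is $q$- (resp.\ $m$-) acyclic. For (3) the target is $\mathrm{Ch}_{\geq 0}(R)_m$, monoidal by Remark~\ref{mmonoidal}: a $q$-cofibration $k$ is an $m$-cofibration, so its pushout-product with any $m$-cofibration is an $m$-cofibration, acyclic under the same hypotheses. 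The unit axiom holds in every case because $R[0]$ is a complex of the projective module $R$, hence cofibrant in all three structures; taking the identity as the cofibrant replacement of the unit makes the comparison map an isomorphism, and for an arbitrary replacement the pushout-product axiom together with Ken Brown's lemma gives the required weak equivalence.

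The main obstacle is the bookkeeping of the first two paragraphs, namely correctly identifying the three classes of trivial fibrations and verifying the inclusions of (acyclic) cofibration classes; once these are in place everything else is formal. The genuinely model-categorical content of the acyclic case is exactly Lemma~\ref{q2h-we} and Lemma~\ref{m2h-we}, which is why they were isolated beforehand.
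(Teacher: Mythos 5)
Your proposal is correct and follows essentially the same route as the paper: record the inclusions $C_q\subseteq C_m\subseteq C_h$ (and the corresponding comparisons of weak equivalences), handle the acyclic cases via Lemmas~\ref{q2h-we} and~\ref{m2h-we}, and then deduce each enrichment from the monoidality of the target structure (Proposition~\ref{monoidalprop} for (1)--(2), Remark~\ref{mmonoidal} for (3)). You are somewhat more thorough than the paper, which asserts the cofibration inclusions without the lifting-property justification and leaves the unit axiom implicit.
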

	
	\begin{proof}
		
		Before we prove each of the enrichment, let us recall that we have the following comparison between the different classes of maps defining the three model structures. $W_h\subseteq W_m= W_q$, $C_q\subseteq C_m \subseteq C_h$ and $F_h=F_m\subseteq F_q$.
		
		\noindent For (1), let $i:K\hookrightarrow L$ and $j:A\hookrightarrow X$ be respectively a $h$-cofibration and an $q$-cofibration. $i\Square j$ is an $h$-cofibration since $C_q\subseteq C_h$ and $\mathrm{Ch}_{\geq 0}(R)_h$ is monoidal. A similar argument works for the case where $i$ is an acyclic $h$-cofibration. If instead $j$ is an acyclic $q$-cofibration, then by \textbf{Lemma~\ref{q2h-we}}, $j$ is an acyclic $h$-cofibration. Hence, the above argument applies again. 
		
		 \noindent For (2), the argument here is the same as in (i), after replacing \textbf{Lemma~\ref{q2h-we}} by \textbf{Lemma~\ref{m2h-we}}. 
		 
		\noindent For (3), $\mathrm{Ch}_{\geq 0}(R)_m$ is enriched over $\mathrm{Ch}_{\geq 0}(R)_q$, since we have $C_q\subseteq C_m$ and \mbox{$W_m= W_q$}, and also $\mathrm{Ch}_{\geq 0}(R)_m$ is monoidal by \textbf{Remark~\ref{mmonoidal}}. 
	\end{proof}
	
	\begin{proposition}\label{nonqhm}
		For a commutative ring $R$ that admits a non-projective module,
		\begin{enumerate}
			\item $\mathrm{Ch}_{\geq 0}(R)_q$ is not enriched over $\mathrm{Ch}_{\geq 0}(R)_h$,
			\item $\mathrm{Ch}_{\geq 0}(R)_m$ is not enriched over $\mathrm{Ch}_{\geq 0}(R)_h$, 
			\item $\mathrm{Ch}_{\geq 0}(R)_q$ is not enriched over $\mathrm{Ch}_{\geq 0}(R)_m$. 
		\end{enumerate}
	\end{proposition}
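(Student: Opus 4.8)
The plan is to refute, in each of the three cases, the external pushout-product axiom of \textbf{Definition~\ref{def-emc}}, where $\cC$ denotes the model structure named first and $\cV$ the one named second. Since the axiom asks that the pushout-product of \emph{every} pair of cofibrations be a cofibration, it suffices in each case to exhibit one cofibration $i$ in $\cC$ and one cofibration $k$ in $\cV$ for which $i\square k$ is not a cofibration in $\cC$. I would fix a non-projective $R$-module $M$ and recall the three relevant descriptions: by \textbf{Proposition~\ref{cor6.4}} the $q$-cofibrant objects are exactly the complexes of projectives; by \textbf{Proposition~\ref{h-mod}} the $h$-cofibrations are the degreewise split monomorphisms; and by \cite{cole2006mixing} the $m$-cofibrant objects are exactly the complexes that are chain homotopy equivalent to a complex of projectives. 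In all three cases I take $i$ to be $0\to R[0]$, which is a $q$-cofibration and hence also an $m$-cofibration since $C_q\subseteq C_m$. Because the sources of both $i$ and $k$ will be the zero complex, the pushout-product reduces to the map $0\to R[0]\otimes W$, which is isomorphic to $0\to W$ as $R[0]$ is the tensor unit; so $i\square k$ is a cofibration exactly when the codomain $W$ is a cofibrant object.

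For parts (1) and (2) I would take $k$ to be $0\to M[0]$, a degreewise split monomorphism and hence an $h$-cofibration. Then $i\square k$ is the map $0\to M[0]$. In case (1), this is a $q$-cofibration only if $M[0]$ is a complex of projectives, i.e. only if $M$ is projective; since it is not, the axiom fails and $\mathrm{Ch}_{\geq 0}(R)_q$ is not enriched over $\mathrm{Ch}_{\geq 0}(R)_h$. In case (2), the map $0\to M[0]$ is an $m$-cofibration only if $M[0]$ is $m$-cofibrant, i.e. chain homotopy equivalent to a complex of projectives; I claim this again forces $M$ to be projective, so the axiom fails and $\mathrm{Ch}_{\geq 0}(R)_m$ is not enriched over $\mathrm{Ch}_{\geq 0}(R)_h$.

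For part (3) I would take $k$ to be $0\to D$, where $D$ is the contractible complex $M\xrightarrow{\mathrm{id}}M$ concentrated in degrees $1$ and $0$. Since $D$ is chain homotopy equivalent to the zero complex, it is $m$-cofibrant, so $0\to D$ is an $m$-cofibration. Then $i\square k$ is the map $0\to D$, which is a $q$-cofibration only if $D$ is a complex of projectives; but the nonzero terms of $D$ are the non-projective module $M$, so this fails and $\mathrm{Ch}_{\geq 0}(R)_q$ is not enriched over $\mathrm{Ch}_{\geq 0}(R)_m$.

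The only nonformal ingredient, and the step I expect to be the main obstacle, is the claim used in part (2): $M[0]$ is chain homotopy equivalent to a complex of projectives only if $M$ is projective. I would prove the contrapositive by a standard $\mathrm{Ext}$ computation: a chain homotopy equivalence $P_\bullet\simeq M[0]$ with $P_\bullet$ a non-negatively graded complex of projectives exhibits $P_\bullet$ as a projective resolution of $M$, and applying $\mathrm{Hom}_R(-,N)$ (which preserves chain homotopies) yields a chain homotopy equivalence between $\mathrm{Hom}_R(P_\bullet,N)$ and $N$ concentrated in degree $0$; hence $\mathrm{Ext}^{i}_R(M,N)=0$ for every $i>0$ and every $N$, so $M$ is projective. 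The remaining point requiring care is to cite the exact characterization of $m$-cofibrant objects from \cite{cole2006mixing}; everything else is routine bookkeeping for pushout-products of maps out of the zero object.
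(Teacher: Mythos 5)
Your proposal is correct, and its skeleton is exactly the paper's: refute the external pushout-product axiom by tensoring the cofibration $0\to R[0]$ in $\cC$ against a map $0\to W$ that is a cofibration in $\cV$ but whose target is not cofibrant in $\cC$. For part (1) you and the paper give literally the same counterexample ($W=M[0]$ with $M$ non-projective). Where you add genuine value is in parts (2) and (3), which the paper dispatches with ``similar counter-examples work since $C_m\subseteq C_h$ and $C_q\subseteq C_m$.'' That one-liner elides two real points that you handle correctly: for (2), showing that $0\to M[0]$ fails to be an $m$-cofibration requires knowing that $M[0]$ is not chain homotopy equivalent to a non-negatively graded complex of projectives, and your $\mathrm{Ext}$ argument (a homotopy equivalence $P_\bullet\simeq M[0]$ makes $P_\bullet$ a projective resolution of $M$ with $\mathrm{Ext}^{i}_R(M,N)=0$ for $i>0$, forcing $M$ projective) is the right justification --- modulo the harmless slip that $\mathrm{Hom}_R(M[0],N)$ is $\mathrm{Hom}_R(M,N)$, not $N$, in degree $0$; for (3), the counterexample genuinely cannot be ``the same as above,'' since $M[0]$ is not $m$-cofibrant and hence $0\to M[0]$ is not an $m$-cofibration, and your switch to the contractible disk $D=(M\xrightarrow{\mathrm{id}}M)$, which is $m$-cofibrant but not degreewise projective, is exactly the needed repair. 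The only loose end is the citation for the characterization of $m$-cofibrant objects as the ($h$-cofibrant) objects homotopy equivalent to $q$-cofibrant ones; this is in \cite{cole2006mixing} and should be pinned to a precise statement, but it is standard.
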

	
	\begin{proof}
	For (1), the comparison of the classes of maps gives us $C_q\subseteq C_h$. Consider the $q$-cofibration $i:0\to R[0]$ and the $h$-cofibration $j:0\to L$ that is not a $q$-cofibration, i.e. where $L$ is not degreewise projective. We have that $i\Square j=j:0\to L$ is not a q-cofibration by definition. 
	
	\noindent For (2) \& (3), similar counter-examples as above work since $C_m\subseteq C_h$ and $C_q\subseteq C_m$.
	\end{proof}
	
	\section{Induced Hurewicz model structure on \texorpdfstring{$\mathrm{sMod}_R$}{sModR}}\label{inducedmodel}
	
	Having an equivalence of categories, one can transport a model structure from one category to the other; see the discussion in \cite{transmod} for details. Hence, the Dold--Kan correspondence induces a model structure on the category of simplicial $R$-modules $\mathrm{sMod}_R$ also called the \Def{Hurewicz} model structure and given by:
	\begin{enumerate}
		\item A map $f$ is a weak equivalence if its normalization $N(f)$ is a Hurewicz weak equivalence in $\mathrm{Ch}_{\geq 0}(R)$.
		\item A map $p$ is a fibration if its normalization $N(f)$ is a Hurewicz fibration in $\mathrm{Ch}_{\geq 0}(R)$.
		\item A map $i$ is a cofibration if its normalization $N(f)$ is a Hurewicz cofibration in $\mathrm{Ch}_{\geq 0}(R)$.
	\end{enumerate}
	\noindent Moreover, every object is fibrant and cofibrant.
	\vspace{.5cm}
	
	\textbf{Proposition \ref{EZAWProp}} induces the following.
	
	\begin{proposition}\label{EZAWProp*}
		The chain complex $N(B)^{N(A)}$  is a deformation retract of $N(B^A)$  given by the following two diagrams 
		
		\begin{center}
			\begin{tabular}{cc}
			$\xymatrix{
				N(B)^{N(A)} \ar[r]^-{\mathrm{AW}^*}  \ar@/_3.2pc/[rr]_-{\mathrm{id}}
				& N(B^A) \ar[r]^-{\mathrm{EZ}^*} \ar@{=}[d] & N(B)^{N(A)}, \\
				& & 
			}$
			&
			$\xymatrix{
				N(B^A) \ar[r]^-{\mathrm{EZ}^*}  \ar@/_3.2pc/[rr]_-{\mathrm{id}}
				& N(B)^{N(A)} \ar[r]^-{\mathrm{AW}^*} \ar@{=>}[d] & N(B^A). \\
				& & 
			}$
		\end{tabular}
		\end{center}
		That is the composite $\mathrm{EZ}^* \circ \mathrm{AW}^*$ is the identity, and the composite $\mathrm{AW}^* \circ \mathrm{EZ}^*$ is naturally chain homotopic to the identity. Here,
		\begin{equation*}
			N(B)^{N(A)} := \mathrm{\underline{Hom}}_{\mathrm{Ch}_{\geq 0}(R)}(N(A),N(B))\qquad \text{and}\qquad B^A := \mathrm{\underline{Hom}}_{\mathrm{sMod}_R}(A,B).
		\end{equation*}
	\end{proposition}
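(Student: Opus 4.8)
The plan is to realize $\mathrm{AW}^*$ and $\mathrm{EZ}^*$ as adjoint transposes (mates) of the Alexander--Whitney and shuffle maps, and then to deduce the two identities by transporting the relations $\mathrm{AW}\circ\mathrm{EZ}=\mathrm{id}$ and $\mathrm{EZ}\circ\mathrm{AW}\simeq\mathrm{id}$ of \textbf{Proposition~\ref{EZAWProp}} across the tensor--hom adjunctions and the Dold--Kan equivalence. Throughout I write $\mathrm{ev}\colon B^A\otimes A\to B$ for the evaluation (the counit of $-\otimes A\dashv(-)^A$ in $\mathrm{sMod}_R$) and use the internal adjunction $-\otimes N(A)\dashv\mathrm{\underline{Hom}}_{\mathrm{Ch}_{\geq 0}(R)}(N(A),-)$ from \textbf{Lemma~\ref{geq0monoidal}}. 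First I would define $\mathrm{EZ}^*\colon N(B^A)\to N(B)^{N(A)}$ to be the transpose of
\[
N(B^A)\otimes N(A)\xrightarrow{\ \mathrm{EZ}\ }N(B^A\otimes A)\xrightarrow{\ N(\mathrm{ev})\ }N(B),
\]
i.e. the comparison map on internal homs induced by the lax monoidal (shuffle) structure of $N$.

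Next I would define $\mathrm{AW}^*\colon N(B)^{N(A)}\to N(B^A)$ using the Alexander--Whitney map together with the denormalization $\Gamma$ (with $N\Gamma\cong\mathrm{id}$). Since $N$ is fully faithful it suffices to produce a map $\Gamma(N(B)^{N(A)})\otimes A\to B$ in $\mathrm{sMod}_R$, and I would take the one whose image under $N$ is the composite
\[
N(\Gamma(N(B)^{N(A)})\otimes A)\xrightarrow{\ \mathrm{AW}\ }N(B)^{N(A)}\otimes N(A)\xrightarrow{\ \mathrm{ev}_{\mathrm{Ch}}\ }N(B),
\]
where $\mathrm{ev}_{\mathrm{Ch}}$ is the chain-level evaluation and I have used $N\Gamma\cong\mathrm{id}$ to identify $N\Gamma(N(B)^{N(A)})$ with $N(B)^{N(A)}$; transposing back yields $\mathrm{AW}^*$. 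Both maps are natural in $A$ and $B$ because $\mathrm{EZ}$, $\mathrm{AW}$, $\mathrm{ev}$ and the unit/counit of Dold--Kan are.

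To prove $\mathrm{EZ}^*\circ\mathrm{AW}^*=\mathrm{id}_{N(B)^{N(A)}}$ I would compute the transpose of the composite across $-\otimes N(A)\dashv\mathrm{\underline{Hom}}_{\mathrm{Ch}_{\geq 0}(R)}(N(A),-)$. Writing the source $N(B)^{N(A)}$ as $N(X)$ with $X:=\Gamma(N(B)^{N(A)})$ and using the naturality of $\mathrm{EZ}$ to slide $\mathrm{AW}^*\otimes N(A)$ past the shuffle map, the transpose becomes $\mathrm{ev}_{\mathrm{Ch}}\circ(\mathrm{AW}_{X,A}\circ\mathrm{EZ}_{X,A})$; by $\mathrm{AW}\circ\mathrm{EZ}=\mathrm{id}$ of \textbf{Proposition~\ref{EZAWProp}} the parenthesised composite is the identity, so the transpose is exactly $\mathrm{ev}_{\mathrm{Ch}}$, whose transpose is $\mathrm{id}$. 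For the second identity $\mathrm{AW}^*\circ\mathrm{EZ}^*\simeq\mathrm{id}_{N(B^A)}$ the same bookkeeping, run through the identification $\mathrm{Hom}_{\mathrm{Ch}}(N(B^A),N(B^A))\cong\mathrm{Hom}_{\mathrm{Ch}}(N(B^A\otimes A),N(B))$, shows that $\mathrm{AW}^*\circ\mathrm{EZ}^*$ corresponds to $N(\mathrm{ev})\circ\mathrm{EZ}\circ\mathrm{AW}$ while $\mathrm{id}$ corresponds to $N(\mathrm{ev})$ (that the inner composite is $\mathrm{EZ}\circ\mathrm{AW}$ rather than $\mathrm{AW}\circ\mathrm{EZ}$ is forced, since otherwise $\mathrm{EZ}^*$ and $\mathrm{AW}^*$ would be mutually inverse isomorphisms, which they are not in general). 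The natural chain homotopy $s$ with $\partial s+s\partial=\mathrm{EZ}\circ\mathrm{AW}-\mathrm{id}$ of \textbf{Proposition~\ref{EZAWProp}} then gives $N(\mathrm{ev})\circ s$ as a homotopy on the transpose side, which I would transport back to a natural homotopy $s^*$ witnessing $\mathrm{AW}^*\circ\mathrm{EZ}^*\simeq\mathrm{id}$.

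I expect the transport of this homotopy to be the main obstacle. The functor $\mathrm{\underline{Hom}}_{\mathrm{Ch}_{\geq 0}(R)}$ is contravariant in its first argument, so passing $s$ through it reverses the order of composites and introduces the sign $(-1)^{|f|}$ from the differential in \textbf{Definition~\ref{homCh}}; I must check that these signs cohere so that $s^*$ is a genuine chain homotopy and that the transposition of homotopies is compatible with the enriched (cotensor) structure used to define it. Moreover $\mathrm{\underline{Hom}}_{\mathrm{Ch}_{\geq 0}(R)}$ is the good truncation $\tau_{\geq 0}$ of the unbounded hom complex, so I must verify that $\mathrm{EZ}^*$, $\mathrm{AW}^*$ and $s^*$ are all compatible with the truncation in degree $0$, and that naturality in $A$ and $B$ is preserved, so that the conclusion is the natural deformation retract asserted in the statement.
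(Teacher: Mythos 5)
Your proposal is correct and follows essentially the same route as the paper: both realize $\mathrm{EZ}^*$ and $\mathrm{AW}^*$ as adjoint transposes of $\mathrm{EZ}$ and $\mathrm{AW}$ across the tensor--hom adjunctions and the Dold--Kan equivalence, deduce $\mathrm{EZ}^*\circ\mathrm{AW}^*=\mathrm{id}$ from $\mathrm{AW}\circ\mathrm{EZ}=\mathrm{id}$ by contravariant functoriality, and transport the natural chain homotopy $\mathrm{EZ}\circ\mathrm{AW}\simeq\mathrm{id}$ to the other composite. The only differences are presentational: the paper computes the transposes degreewise by representing $n$-chains as maps out of the disk complexes $D^n$ and $\Gamma(D^n)$ rather than using the global evaluation maps, and for the homotopy half it likewise defers the remaining sign and truncation checks to a cited argument, much as you flag them at the end.
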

	
	\begin{proof} Here we consider the categories  $\mathrm{Ch}_{\geq 0}(R)$ and $\mathrm{sMod}_R$ endowed with their closed symmetric monoidal structures given by \textbf{Lemmas \ref{geq0monoidal}} and \textbf{\ref{sModmonoidal}}. 
		
	\noindent Consider $D^n$ the $n$-disk chain complex, that is the chain complex with $R \xrightarrow{1} R$ in degrees $n$ and $n-1$, and zero in all other degrees. In degree $n$, we have:
		\begin{flalign*}
			& & N(B^A)_n & \cong \mathrm{Hom}_{\mathrm{Ch}_{\geq 0}(R)}(D^n,N(B^A)) & & \\
							& &  & \cong \mathrm{Hom}_{\mathrm{sMod}_{R}}(\Gamma(D^n),B^A) & & \text{by the Dold--Kan correspondence} \\
							 & & & \cong \mathrm{Hom}_{\mathrm{sMod}_{R}}(A\otimes \Gamma(D^n),B) & & \text{by the unenriched tensor-hom} \\
							 & & & & & \qquad\qquad\qquad\qquad\ \text{adjunction}\\
							& &  & \cong \mathrm{Hom}_{\mathrm{Ch}_{\geq 0}(R)}(N(A\otimes \Gamma(D^n)),N(B)) & & \text{by the Dold--Kan correspondence}
		\end{flalign*}
		and
		\begin{flalign*}
			& & \left( N(B)^{N(A)}\right)_n & \cong \mathrm{Hom}_{\mathrm{Ch}_{\geq 0}(R)}(D^n,N(B)^{N(A)}) & & \\
		& &	& \cong \mathrm{Hom}_{\mathrm{Ch}_{\geq 0}(R)}(N(A)\otimes D^n,N(B)) & & \text{by the unenriched tensor-hom}\\
		& & & & & \qquad\qquad\qquad\qquad\ \text{adjunction.}\\
		\end{flalign*}
		Hence, we have the following comparisons of $n$-chains
			\begin{equation*}
				\xymatrix{
					 N(B^A)_n  \ar[d]^-\cong \ar@<1ex>[r]^-{\mathrm{EZ}^*} & \ar@<.1ex>[l]^-{\mathrm{AW}^*} \left( N(B)^{N(A)}\right)_n \ar[d]^-\cong \\
					 \mathrm{Hom}_{\mathrm{Ch}_{\geq 0}(R)}(N(A\otimes \Gamma(D^n)),N(B)) & \mathrm{Hom}_{\mathrm{Ch}_{\geq 0}(R)}(N(A)\otimes D^n,N(B))
				}
			\end{equation*}
			given by:
			\begin{equation*}
				\xymatrix{
				N(B) & \ar[l]_-f \ar@<1ex>[d]^-{\mathrm{AW}} N(A\otimes \Gamma(D^n)) \ar[dr]^-{\mathrm{AW}^*(g)}  & \\
				 & \ar[ul]^-{\mathrm{EZ}^*(f)} N(A)\otimes D^n  \ar[r]_-g \ar@<.1ex>[u]^-{\mathrm{EZ}} & N(B).
				 }
			\end{equation*}
			From \textbf{Proposition \ref{EZAWProp}} and by functoriality, we have $\mathrm{EZ}^*\circ \mathrm{AW}^* = (\mathrm{AW}\circ \mathrm{EZ})^*=\mathrm{id}^*=\mathrm{id}$.
			
			\noindent Also, the map $\mathrm{AW}^* \circ \mathrm{EZ}^*$ is chain homotopic to the identity, by an argument similar to the proof of \cite[Theorem~6.3.1]{re:michael:msc}.
	\end{proof}
	
	\begin{proposition}\label{sModmodel}
		In the Hurewicz model structure on $\mathrm{sMod}_R$, the three classes of maps are characterized as follows.
		\begin{enumerate}
			\item A map $f$ is a weak equivalence if and only if it is a homotopy equivalence.
			\item A map $p$ is a fibration if and only if it satisfies the HLP.
			\item A map $i$ is a cofibration if and only if it satisfies the HEP.
		\end{enumerate}
	\end{proposition}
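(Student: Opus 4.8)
The plan is to transport, across the Dold--Kan equivalence, the characterisations of cofibrations and fibrations on $\mathrm{Ch}_{\geq 0}(R)$ already obtained above, the key mechanism being that $N$ preserves and reflects the homotopy relation between maps. Fix the cylinder $A \otimes \Z\Delta^1$ in $\mathrm{sMod}_R$ (the tensoring over $\mathrm{sAb}$ of Lemma \ref{sModmonoidal}) together with its two endpoint inclusions, and note that $I := N(\Z\Delta^1)$ is the interval complex detecting chain homotopy in $\mathrm{Ch}_{\geq 0}(R)$. By the $\mathrm{sAb}$-tensoring version of Proposition \ref{EZAWProp}, the Eilenberg--Zilber map $\mathrm{EZ}\colon N(A)\otimes I \to N(A\otimes\Z\Delta^1)$ and the Alexander--Whitney map $\mathrm{AW}$ exhibit $N(A)\otimes I$ as a deformation retract of $N(A\otimes\Z\Delta^1)$, compatibly with the endpoint inclusions by naturality. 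First I would use this to show that a simplicial homotopy $H\colon A\otimes\Z\Delta^1\to B$ produces a chain homotopy $N(H)\circ\mathrm{EZ}$ between $N$ of its endpoints, so that $N$ preserves homotopies; and conversely that a chain homotopy $K\colon N(A)\otimes I\to N(B)$ yields, via $K\circ\mathrm{AW}$ together with the full faithfulness of $N$, a simplicial homotopy between the corresponding maps, so that $N$ reflects homotopies.

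Part (1) then follows at once: by construction $f$ is a weak equivalence precisely when $N(f)$ is a chain homotopy equivalence, and since $N$ is an equivalence that preserves and reflects homotopies, $N(f)$ is a chain homotopy equivalence if and only if $f$ is a homotopy equivalence in $\mathrm{sMod}_R$ (homotopy inverses correspond under $N$, by full faithfulness together with preservation and reflection of homotopies).

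For part (2) I would argue that $p$ is a fibration if and only if $N(p)$ is a Hurewicz fibration in $\mathrm{Ch}_{\geq 0}(R)$, which by the preceding characterisation holds if and only if $N(p)$ satisfies the HLP there; it then remains to match the two homotopy lifting properties. I would phrase the HLP through the path object exactly as in the chain-complex proof above, namely as the existence of a section, in positive degrees, of the map from the path object to the mapping cocylinder. Here Proposition \ref{EZAWProp*} is decisive: it identifies $N(B^A)$ with the chain path object $N(B)^{N(A)}$ up to deformation retract (and its cotensor form does the same for $B^{\Z\Delta^1}$ and $N(B)^{I}$), so a section witnessing the HLP for $N(p)$ transports through $\mathrm{EZ}^{*}$, $\mathrm{AW}^{*}$ and $\Gamma$ to a section witnessing the HLP for $p$, and conversely. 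Equivalently, one checks that the endpoint inclusion $A\to A\otimes\Z\Delta^1$ is an acyclic cofibration in $\mathrm{sMod}_R$ (its normalisation is, by Part (1) and the cylinder comparison), so that fibrations lift against it, the converse reconstructing the positive-degree splitting of $N(p)$ from a chosen lift.

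Part (3) is the formal dual: $i$ is a cofibration if and only if $N(i)$ satisfies the HEP in $\mathrm{Ch}_{\geq 0}(R)$, and now Proposition \ref{EZAWProp} (cylinder objects) plays the role Proposition \ref{EZAWProp*} played for fibrations, so the mapping-cylinder retraction characterising the HEP transports across the equivalence in both directions. The main obstacle throughout is that $N$ is only lax monoidal, not strict: the cylinder and path objects on the two sides agree only up to the $\mathrm{EZ}$ and $\mathrm{AW}$ chain homotopy equivalences, so at each step I must invoke homotopy-invariance of the HLP and HEP to replace the strict (co)tensor by its homotopy-equivalent image, and I must carry along the degree-zero exception in the definition of fibration (a split epimorphism only in positive degrees) to be certain the transported section lands exactly where the preceding proposition requires.
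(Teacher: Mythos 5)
Your proposal is correct and follows essentially the same route as the paper: transport the HLP/HEP characterisations already proved for $\mathrm{Ch}_{\geq 0}(R)$ across the Dold--Kan equivalence, using the $\mathrm{EZ}/\mathrm{AW}$ deformation retracts of Propositions \ref{EZAWProp} and \ref{EZAWProp*} to compare cylinders and path objects, and the fact that $N$ preserves and reflects homotopies for part (1). The only cosmetic difference is that you invoke the path-object comparison (Proposition \ref{EZAWProp*}) for fibrations and the cylinder comparison (Proposition \ref{EZAWProp}) for cofibrations, whereas the paper does the reverse --- testing the HLP against the cylinder inclusion $\iota_0\colon A\to A\otimes\Z\Delta^1$ and the HEP against the evaluation $\mathrm{ev}_0\colon B^{\Z\Delta^1}\to B$ --- and both adjoint formulations work.
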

	
	\begin{proof} Here we consider the categories $\mathrm{Ch}_{\geq 0}(R)$ and $\mathrm{sMod}_R$ enriched, tensored and cotensored over $\mathrm{Ch}_{\geq 0}(\Z)$ and $\mathrm{sAb}$ respectively, given by \textbf{Lemmas \ref{geq0monoidal}} and \textbf{\ref{sModmonoidal}}. 
		
	\noindent (1) Two maps $f,f' : A\to B$ in $\mathrm{sMod}_R$ are homotopic if and only if their normalizations $N(f), N(f') : N(A)\to N(B)$ are chain homotopic \cite[Theorem~8.4.1]{weibel1994introduction}. Therefore $f$ is a homotopy equivalence if and only if $N(f)$ is a chain homotopy equivalence, that is, $f$ is a weak equivalence in $\mathrm{sMod}_R$. 
	
	\noindent (2) $(\implies)$ Let $p:E\to B$ be a fibration in $\mathrm{sMod}_R$. Consider the lifting problem in $\mathrm{sMod}_R$ given by
	\begin{equation*}
		\xymatrix{
			A \ar[d]_-{\iota_0} \ar[r]^-{f} & E \ar[d]^-{p}\\
			A\otimes \Z \Delta^1 \ar[r]_-{g} \ar@{.>}[ur]^-? & B.
		}
	\end{equation*}
	\noindent Applying the normalization $N$,
	\begin{equation*}
		\xymatrix{
			N(A) \ar[d]_-{\iota_0} \ar[dr]^-{N(\iota_0)} \ar[rr]^-{N(f)} & & N(E) \ar[d]^-{N(p)}\\
			N(A)\otimes N(\Z \Delta^1) \ar[r]_-{\mathrm{EZ}} \ar@{.>}[urr]^-{\exists h} & N(A\otimes \Z \Delta^1) \ar[r]_-{N(g)}\ar@{.>}[ur]_-{\exists H} & N(B)
		}
	\end{equation*}
	\noindent where a lift $h$ exists by assumption and the lift $H$, in the square given by the identity $N(g)\circ \mathrm{EZ} = N(p)\circ h$, exists thanks to the fact that the map $\mathrm{EZ}$ is a split monomorphism and a chain homotopy equivalence by \textbf{Proposition \ref{EZAWProp}}, that is $\mathrm{EZ}$ is a trivial Hurewicz cofibration and so satisfies the Left Lifting Property (LLP) with respect to the Hurewicz fibration $N(p)$ in $\mathrm{Ch}_{\geq 0}(R)$. Applying the denormalization $\Gamma$,
	\begin{equation*}
		\xymatrix{
			A \ar[d]_-{\Gamma(\iota_0)} \ar[dr]^-{\iota_0} \ar[rr]^-{f} & & E \ar[d]^-{p}\\
			\Gamma(N(A)\otimes N(\Z \Delta^1)) \ar[r]_-{\Gamma(\mathrm{EZ})} \ar@{.>}[urr]^-{\Gamma(h)} & A\otimes \Z \Delta^1 \ar[r]_-{g}\ar@{.>}[ur]_-{\Gamma(H)} & B.
		}.
	\end{equation*}
	\noindent So the homotopy $\Gamma(H)$ gives a solution to the lifting problem. Therefore $p$ satisfies the HLP in $\mathrm{sMod}_R$. 
	
	\noindent $(\implies)$ Let $p:E\to B$ be a map satisfying the HLP  in $\mathrm{sMod}_R$. Consider the lifting problem in $\mathrm{Ch}_{\geq 0}(R)$ given by
				\begin{equation*}
					\xymatrix{
						A \ar[d]_-{\iota_0} \ar[r]^-{f} & N(E) \ar[d]^-{N(p)}\\
						A\otimes N(\Z \Delta^1) \ar[r]_-{g} \ar@{.>}[ur]^-? & N(B).
					}
				\end{equation*}
				\noindent Applying the denormalization $\Gamma$,
				\begin{equation*}
					\xymatrix{
						\Gamma(A) \ar[d]_-{\iota_0} \ar[rr]^-{\Gamma(f)} & & E \ar[d]^-{p}\\
						\Gamma(A)\otimes \Z \Delta^1 \ar[r]_-{\mu} \ar@{.>}[urr]^-{\exists h} & \Gamma(A\otimes N(\Z \Delta^1)) \ar[r]_-{\Gamma(g)} & B
					}
				\end{equation*}
				\noindent where $\mu: \Gamma(A)\otimes \Z \Delta^1 \to \Gamma(A\otimes N(\Z \Delta^1))$ is the natural transformation that makes $\Gamma$ lax monoidal and a lift $h$ exists by assumption. Applying the normalization $N$,
				\begin{equation*}
					\xymatrix{
						& A \ar@/_/[ddl]_-{\iota_0} \ar[d]^-{N(\iota_0)} \ar[rr]^-{f} & & N(E) \ar[d]^-{N(p)}\\
						& N(\Gamma(A)\otimes \Z \Delta^1) \ar[r]^-{\mathrm{AW}} \ar@{.>}[urr]^-{N(h)} & A\otimes N(\Z \Delta^1) \ar[r]_-{g} & N(B)\\
						A\otimes N(\Z \Delta^1) \ar[ur]^-{\mathrm{EZ}} \ar@{=}[urr]  & & & 
					}
				\end{equation*}
				\noindent where $\mathrm{AW}=N(\mu)$ and $\mathrm{AW}\circ \mathrm{EZ}=\mathrm{id}$ by \textbf{Proposition \ref{EZAWProp}}. So the homotopy \mbox{$H=N(h)\circ \mathrm{EZ}$} gives a solution to the lifting problem. Therefore $p$ is a fibration. 
				
				\noindent (3) $(\implies)$ Let $i:A\to X$ be a cofibration in $\mathrm{sMod}_R$. Consider the lifting problem in $\mathrm{sMod}_R$ given by
				\begin{equation*}
					\xymatrix{
						A \ar[d]_-{i} \ar[r]^-{f} & B^{\Z\Delta^1} \ar[d]^-{ev_0}\\
						X \ar[r]_-{g} \ar@{.>}[ur]^-? & B.
					}
				\end{equation*}
				\noindent Applying the normalization $N$,
				\begin{equation*}
					\xymatrix{
						N(A) \ar[d]_-{N(i)} \ar[r]^-{N(f)} & N(B^{\Z\Delta^1}) \ar[dr]_-{N(ev_0)} \ar[r]^-{\mathrm{EZ}^*} & N(B)^{N(\Z\Delta^1)} \ar[d]^-{ev_0}\\
						N(X) \ar@{.>}[ur]^-{\exists H}\ar@{.>}[urr]^-{\exists h}\ar[rr]_-{N(g)}  &  & N(B)
					}
				\end{equation*}
				\noindent where a lift $h$ exists by assumption and the lift $H$, in the square given by the identity $\mathrm{EZ}^*\circ N(f) = h\circ N(i)$, exists thanks to the fact that the map $\mathrm{EZ}^*$ is a split epimorphism and a chain homotopy equivalence by \textbf{Proposition \ref{EZAWProp*}}, as the image of a split monomorphism and a chain homotopy equivalence by a contravariant functor, that is $\mathrm{EZ}^*$ is a trivial Hurewicz fibration and so satisfies the Right Lifting Property (RLP) with respect to the Hurewicz cofibration $N(i)$ in $\mathrm{Ch}_{\geq 0}(R)$. Applying the denormalization $\Gamma$,
				\begin{equation*}
					\xymatrix{
						A  \ar[d]_-{i} \ar[r]^-{f} & B^{\Z \Delta^1} \ar[dr]_-{ev_0} \ar[r]^-{\Gamma(\mathrm{EZ}^*)} & \Gamma(N(\Gamma(B)^{\Z\Delta^1})) \ar[d]^-{\Gamma(ev_0)} \\
						X \ar@{.>}[ur]^-{\Gamma(H)}  \ar@{.>}[urr]^-{\Gamma(h)} \ar[rr]_-{g} & & B.
					}
				\end{equation*}
				\noindent So the homotopy $\Gamma(H)$ gives a solution to the lifting problem. Therefore $i$ satisfies the HEP in $\mathrm{sMod}_R$. 
				
				 $(\impliedby)$ Let $i:A\to X$ be a map satisfying the HEP  in $\mathrm{sMod}_R$. Consider the lifting problem in $\mathrm{Ch}_{\geq 0}(R)$ given by
				\begin{equation*}
					\xymatrix{
						N(A) \ar[d]_-{N(i)} \ar[r]^-{f} & B^{N(\Z\Delta^1)} \ar[d]^-{ev_0}\\
						N(X) \ar[r]_-{g} \ar@{.>}[ur]^-? & B.
					}
				\end{equation*}
				\noindent Applying the denormalization $\Gamma$,
				\begin{equation*}
					\xymatrix{
						A \ar[d]_-{i} \ar[r]^-{\Gamma(f)} & \Gamma(B^{N(\Z\Delta^1)}) \ar[r]^-{\mu^*} & \Gamma(B)^{\Z\Delta^1} \ar[d]^-{ev_0}\\
						X \ar[rr]_-{\Gamma(g)} \ar@{.>}[urr]^-{\exists h} &  & \Gamma(B)
					}
				\end{equation*}
				\noindent where $\mu^*=\mathrm{Hom}_{\mathrm{Ch}(\Z)}(\mu,N(B))$ and a lift $h$ exists by assumption. Applying the normalization $N$,
				\begin{equation*}
					\xymatrix{
						& & & B^{N(\Z \Delta^1)} \ar@/^/[ddl]^-{ev_0} \\
						N(A)  \ar[d]_-{N(i)} \ar[r]^-{f} & B^{N(\Z \Delta^1)} \ar@{=}[urr] \ar[r]_-{\mathrm{AW}^*} & N(\Gamma(B)^{\Z\Delta^1}) \ar[d]_-{N(ev_0)} \ar[ur]_-{\mathrm{EZ}^*} & \\
						N(X) \ar@{.>}[urr]^-{N(h)} \ar[rr]_-{g} & & B &
					}
				\end{equation*}
				\noindent where $\mathrm{AW}^*=N(\mu^*)$ and $\mathrm{EZ}^*\circ \mathrm{AW}^*=\mathrm{id}$ by \textbf{Proposition \ref{EZAWProp*}}. So the homotopy \mbox{$H=\mathrm{EZ}^*\circ N(h)$} gives a solution to the lifting problem. Therefore $i$ is a cofibration.
	\end{proof}
	
	\begin{proposition}\label{sModmonoidalmodel}\text{\ }
		\begin{enumerate}
			\item For a commutative ring $R$, the category $\mathrm{sMod}_R$ with the Hurewicz model structure is a monoidal model category.
			\item For an arbitrary ring $R$, the Hurewicz model structure on $\mathrm{sMod}_R$ is an enriched model category over the category $\mathrm{sAb}$ of simplicial abelian groups, with the \linebreak
			Hurewicz model structure.
		\end{enumerate}
	\end{proposition}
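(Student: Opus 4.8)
The plan is to transfer the pushout-product and unit axioms from the Hurewicz model structure on $\mathrm{Ch}_{\geq 0}(R)$ across the Dold--Kan correspondence, using that the three classes of maps in $\mathrm{sMod}_R$ are by construction created by $N$. The unit axiom is immediate in both parts: every object of $\mathrm{sMod}_R$ is cofibrant, so the tensor unit $c(R)$ (resp. $c(\Z)$ in $\mathrm{sAb}$) is already cofibrant and one may take the identity as its cofibrant replacement, making $X\otimes q$ an isomorphism and hence a weak equivalence. It therefore remains to verify the pushout-product axiom, in the monoidal setting (part (1), $R$ commutative, both variables in $\mathrm{sMod}_R$) and in the enriched setting (part (2), $R$ arbitrary, second variable in $\mathrm{sAb}$). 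The two cases are handled by one and the same argument, so I describe part (1); part (2) is identical after replacing $\otimes_R$ by the degreewise $\otimes_\Z$ and invoking \textbf{Proposition \ref{monoidalprop}}(2) in place of (1).

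First I would dispose of the cofibration half \emph{simplicial-degreewise}, which sidesteps the fact that $\otimes$ on $\mathrm{sMod}_R$ and $\otimes$ on $\mathrm{Ch}_{\geq 0}(R)$ do not agree under $N$. Since the tensor product of \textbf{Lemma \ref{sModmonoidal}} and all colimits in $\mathrm{sMod}_R$ are formed degreewise, the pushout-product $i\square k$ is, in each simplicial degree $n$, the pushout-product $i_n\square k_n$ in $\mathrm{Mod}_R$. Using the natural Dold--Kan splitting $A_n\cong\bigoplus_{[n]\twoheadrightarrow[m]}N(A)_m$, a map $g$ in $\mathrm{sMod}_R$ has $N(g)$ a degreewise split monomorphism if and only if each $g_n$ is a split monomorphism of $R$-modules. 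Thus $i$ and $k$ being cofibrations gives degreewise splittings $X_n\cong A_n\oplus A_n'$ and $W_n\cong V_n\oplus V_n'$, and then $(i\square k)_n$ is exactly the inclusion of the summand of $X_n\otimes_R W_n$ complementary to $A_n'\otimes_R V_n'$; it is therefore a split monomorphism. By the biconditional above applied to $i\square k$, this shows $i\square k$ is a cofibration.

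For the acyclicity half I would use the Eilenberg--Zilber map of \textbf{Proposition \ref{EZAWProp}}. Writing $P_{\mathrm{Ch}}$ and $P_{\mathrm{s}}$ for the sources of $N(i)\square N(k)$ and of $N(i\square k)$, naturality of $\mathrm{EZ}$ together with the fact that $N$ preserves pushouts assembles a strictly commutative square
\begin{equation*}
	\xymatrix{
		P_{\mathrm{Ch}} \ar[r]^-{N(i)\square N(k)} \ar[d]_-{\mathrm{EZ}} & N(X)\otimes N(W) \ar[d]^-{\mathrm{EZ}} \\
		P_{\mathrm{s}} \ar[r]_-{N(i\square k)} & N(X\otimes W).
	}
\end{equation*}
Both verticals are chain homotopy equivalences: the right-hand one by \textbf{Proposition \ref{EZAWProp}}, and the left-hand one by the gluing lemma, since $P_{\mathrm{Ch}}$ and $P_{\mathrm{s}}$ are the pushouts of a map of spans whose legs $N(i)\otimes\mathrm{id}$ and $\mathrm{id}\otimes N(k)$ are split monomorphisms (cofibrations) and whose three vertical components are the chain homotopy equivalences $\mathrm{EZ}$, the Hurewicz structure on $\mathrm{Ch}_{\geq 0}(R)$ being left proper because all its objects are cofibrant. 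If now $i$ or $k$ is in addition acyclic, then $N(i)\square N(k)$ is an acyclic cofibration in $\mathrm{Ch}_{\geq 0}(R)_h$ by \textbf{Proposition \ref{monoidalprop}}, hence a chain homotopy equivalence; two out of three applied to the square forces $N(i\square k)$ to be a chain homotopy equivalence, that is, $i\square k$ is a weak equivalence in $\mathrm{sMod}_R$.

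The main obstacle is precisely the lax compatibility of $N$ with the two tensor products: $N(A\otimes B)$ and $N(A)\otimes N(B)$ are only naturally chain homotopy equivalent via $\mathrm{EZ}/\mathrm{AW}$, not isomorphic, so a naive transfer fails. The above splits the difficulty in two so that neither step feels the defect sharply: the cofibration part is settled degreewise (where the tensor products do agree on the nose), and the acyclicity part is settled by the gluing lemma and naturality of $\mathrm{EZ}$, which is exactly where left properness of $\mathrm{Ch}_{\geq 0}(R)_h$ is needed to promote the degreewise split monomorphisms into the required weak equivalence of pushouts.
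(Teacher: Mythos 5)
Your proposal is correct, but it takes a genuinely different route from the paper on the cofibration half of the pushout-product axiom. The paper proves that $i\square k$ is a cofibration via the lifting characterization: it normalizes a lifting square against an arbitrary acyclic fibration, precomposes with the Eilenberg--Zilber maps to replace $N(i\square k)$ by the chain-level pushout-product $N(i)\square N(k)$, obtains a lift from monoidality of $\mathrm{Ch}_{\geq 0}(R)_h$ (\textbf{Proposition \ref{monoidalprop}}), and denormalizes. Your degreewise argument --- the natural splitting $A_n\cong\bigoplus_{[n]\twoheadrightarrow[m]}N(A)_m$ shows that $N(g)$ is a degreewise split monomorphism precisely when each $g_n$ is split mono, and the pushout-product of split monomorphisms of $R$-modules is the inclusion of a complementary direct summand --- is valid and more direct: it produces the required splittings on the nose, and it sidesteps the question (left implicit in the paper) of whether a lift obtained for the outer rectangle, whose left edge is $\mathrm{EZ}\circ(N(i)\square N(k))$ rather than $N(i\square k)$, actually restricts to a solution of the original square, given that $\mathrm{EZ}\amalg_{\mathrm{EZ}}\mathrm{EZ}$ is only a homotopy equivalence and not an isomorphism. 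On the acyclicity half the two arguments agree in substance, concluding by 2-out-of-3 from the naturality square comparing $N(i)\square N(k)$ with $N(i\square k)$; the paper simply asserts that $\mathrm{EZ}\amalg_{\mathrm{EZ}}\mathrm{EZ}$ is a chain homotopy equivalence, whereas you supply the justification via the gluing lemma, using that $\mathrm{Ch}_{\geq 0}(R)_h$ is left proper because every object is cofibrant. You also treat the unit axiom explicitly (trivial since every object is cofibrant), which the paper omits. The only blemish is notational: with $i\colon X\to Y$ and $k\colon V\to W$ as in the paper, the target of $i\square k$ should be $Y\otimes W$, not $X\otimes W$ as written in your diagram.
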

	
	\begin{proof}
	For item (1), we consider the category $\mathrm{sMod}_R$ endowed with the closed symmetric monoidal structure as given by \textbf{Lemma \ref{sModmonoidal}}. 
	
	\noindent Let $i:X\to Y$ and $k:V\to W$ be cofibrations in $\mathrm{sMod}_R$. Let us show that their pushout product $i\Box k: X\otimes W \amalg_{X\otimes V} Y\otimes V \to Y\otimes W$ is a cofibration too, i.e. for any acyclic fibration $j:A\to B$, the following lifting problem admits a solution
			\begin{equation*}
				\xymatrix{
					X\otimes W \amalg_{X\otimes V} Y\otimes V \ar[d]_-{i\Box k} \ar[r]^-{\widetilde{f}} & A \ar[d]^-{j}\\
					Y\otimes W \ar[r]_-{\widetilde{g}} \ar@{.>}[ur]_-? & B.
				}
			\end{equation*}
			\noindent Applying the normalization $N$,
			\begin{equation}\label{2ndlast}\tiny
				\xymatrix{
					N(X)\otimes N(W) \amalg_{N(X)\otimes N(V)} N(Y)\otimes N(V) \ar[d]_-{N(i)\Box N(k)} \ar[r]^-{\mathrm{EZ}\amalg_{\mathrm{EZ}} \mathrm{EZ}} &  N(X\otimes W \amalg_{X\otimes V} Y\otimes V) \ar[ddl]_-{N(i\Box k)} \ar[r]^-{N(\widetilde{f})} & N(A) \ar[dd]^-{N(j)}\\
					N(Y)\otimes N(W) \ar[d]_-{\mathrm{EZ}} & & \\
					N(Y\otimes W) \ar[rr]_-{N(\widetilde{g})} \ar@{.>}[uurr]^-{\exists h} & & N(B)
				}
			\end{equation}
			\noindent where the lift $h$, in the square given by the identity 
			\begin{equation*}
				N(j)\circ(N(\widetilde{f})\circ(\mathrm{EZ}\amalg_{\mathrm{EZ}}\mathrm{EZ}))=N(\widetilde{g})\circ(\mathrm{EZ}\circ (N(i)\Box N(k))),
			\end{equation*}
			 \noindent exists thanks to the fact that $\mathrm{EZ}\circ (N(i)\Box N(k))$ is the composition of acyclic cofibrations by \textbf{Proposition \ref{EZAWProp}} and so satisfies the LLP with respect to the acyclic fibration $N(j)$ in $\mathrm{Ch}_{\geq 0}(R)$, since the Hurewicz model structure turns $\mathrm{Ch}_{\geq 0}(R)$ into a monoidal model category as we have shown above. Applying the denormalization $\Gamma$,
			\begin{equation*}\tiny
				\xymatrix{
					\Gamma\left( N(X)\otimes N(W) \amalg_{N(X)\otimes N(V)} N(Y)\otimes N(V)\right)  \ar[d]_-{\Gamma(N(i)\Box N(k))} \ar[r]^-{\Gamma\left( \mathrm{EZ}\amalg_{\mathrm{EZ}} \mathrm{EZ}\right) } &  X\otimes W \amalg_{X\otimes V} Y\otimes V \ar[ddl]_-{i\Box k} \ar[r]^-{\widetilde{f}} & A \ar[dd]^-{j}\\
					\Gamma(N(Y)\otimes N(W)) \ar[d]_-{\Gamma(\mathrm{EZ})} & & \\
					Y\otimes W \ar[rr]_-{\widetilde{g}} \ar@{.>}[uurr]^-{ \Gamma(h)} & & B
				}
			\end{equation*}
		Now assume moreover that $i:X\to Y$ and $k:V\to W$ be cofibrations (with either of the two being additionally a weak equivalence) in $\mathrm{sMod}_R$. We have to show that their pushout product $i\Box k: X\otimes W \amalg_{X\otimes V} Y\otimes V \to Y\otimes W$ is an acyclic cofibration. We have that $N(i)\Box N(k)$ is an acyclic cofibration in the monoidal model category $\mathrm{Ch}_{\geq 0}(R)_h$. Hence, considering the upper-left triangle of the diagram $(\ref{2ndlast})$, we have $N(i)\Box N(k)$, $\mathrm{EZ}\amalg_{\mathrm{EZ}} \mathrm{EZ}$ and $\mathrm{EZ}$ are all chain homotopy equivalences in $\mathrm{Ch}_{\geq 0}(R)_h$ \textbf{Proposition \ref{EZAWProp}}. So, by \mbox{2-out-of-3}, $N(i\Box k)$ is a chain homotopy equivalence. Therefore, $i\Box k$ is an acyclic cofibration in $\mathrm{sMod}_R$.
		
		\noindent For item (2), consider the proof of item (1) where we take map $k$ from $\mathrm{sAb}$ and apply item (2) from \textbf{Proposition \ref{monoidalprop}}.
	\end{proof}

\section{Bousfield model structure on \texorpdfstring{$\mathrm{Ch}^{\geq 0}(\cA)$}{Ch0(A)}} \label{Bousfield}

For a bicomplete abelian category $\cA$ with an injective class $\cI$, that is the dual of a projective class \cite[\S2]{christensen1998ideals}, one can recover the Bousfield model structure on the category $\mathrm{Ch}^{\geq 0}(\cA)$ of non-negatively graded cochain complexes over $\cA$. 
For a bicomplete pointed category $\cA$ (not necessarily abelian) with an injective class $\cI$, since the opposite category $(c\cA)^{\mathrm{op}}$ of cosimplicial objects in $\cA$ satisfies $(c\cA)^{\mathrm{op}} \cong s(\cA^{\mathrm{op}})$, applying \mbox{\cite[Theorem~6.3]{christensen2002quillen}} to the opposite category $\cA^{\mathrm{op}}$, we obtain the following model structure on $c\cA$.

\begin{proposition}\label{chris63}
	If for each $X$ in $c\cA$ and each $I$ in $\cI$, $\cA(X,I)$ is a fibrant simplicial set, then the following classes of maps form a model structure on $c\cA$.
	\begin{enumerate}
		\item A map $f$ is a weak equivalence if it is an $\cI$-equivalence, that is $\cA(f,I)$ is a weak equivalence of simplicial sets for each $I$ in $\cI$.
		\item  A map $f$ is a cofibration if $\cA(f,I)$ is a fibration of simplicial sets for each $I$ in $\cI$.
		\item  A map $f$ is a fibration if it has the RLP with respect to all $\cI$-trivial cofibrations.
	\end{enumerate}
\end{proposition}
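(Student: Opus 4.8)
The plan is to obtain the model structure on $c\cA$ entirely by dualizing the Christensen--Hovey model structure on simplicial objects, so that none of the model category axioms need to be checked afresh. The starting observation is that an injective class $\cI$ on $\cA$ is, by definition, the same data as a projective class on the opposite category $\cA^{\mathrm{op}}$, with the same underlying collection of objects; this is precisely the duality between injective and projective classes \cite{christensen1998ideals}. Moreover $\cA^{\mathrm{op}}$ is again bicomplete and pointed, since limits and colimits interchange on passage to the opposite and a zero object is self-dual. Thus $\cA^{\mathrm{op}}$ is a bicomplete pointed category equipped with a projective class $\cP=\cI$, which is exactly the setting of \cite[Theorem~6.3]{christensen2002quillen}.

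First I would record the identification of categories $s(\cA^{\mathrm{op}}) \cong (c\cA)^{\mathrm{op}}$ together with the translation of hom-objects: for $P \in \cP=\cI$ and an object $X$ one has a natural isomorphism of simplicial sets $\cA^{\mathrm{op}}(P, X) \cong \cA(X, P)$. Under this translation the hypothesis of \cite[Theorem~6.3]{christensen2002quillen}, namely that $\cA^{\mathrm{op}}(P, X)$ be a fibrant simplicial set for every simplicial object $X$ and every $P \in \cP$, becomes exactly the stated hypothesis that $\cA(X, I)$ is fibrant for every $X \in c\cA$ and every $I \in \cI$. Hence Theorem~6.3 applies and yields a model structure on $s(\cA^{\mathrm{op}})$ whose weak equivalences are the $\cP$-equivalences and whose fibrations are the maps $f$ with $\cA^{\mathrm{op}}(P, f)$ a fibration of simplicial sets for each $P$.

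Next I would transport this structure across the isomorphism $s(\cA^{\mathrm{op}}) \cong (c\cA)^{\mathrm{op}}$ and then pass to opposites, using the standard fact that the opposite of a model category $\mathcal{M}$ is again a model category, with the same weak equivalences and with the classes of fibrations and cofibrations interchanged (the model category axioms being self-dual; see \cite{hovey1999model}). Reading off the three classes and applying the hom-translation above gives precisely the asserted classes on $c\cA$: the $\cP$-equivalences become the $\cI$-equivalences of item~(1); the fibrations of $s(\cA^{\mathrm{op}})$, described by $\cA(f,I)$ being a fibration of simplicial sets, become the cofibrations of item~(2); and the cofibrations of $s(\cA^{\mathrm{op}})$, characterized by the left lifting property against trivial fibrations, become the fibrations of item~(3), characterized by the right lifting property against $\cI$-trivial cofibrations.

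The argument is thus essentially formal, and the one point requiring genuine care is the bookkeeping of directions and variances across the two opposite-category passages: one must check that $\cA(f, I)$ is well-defined and that its being a weak equivalence or a fibration is insensitive to the orientation conventions introduced by passing to $\cA^{\mathrm{op}}$ and to $(c\cA)^{\mathrm{op}}$, and that the fibrancy hypothesis transports correctly. I expect that matching of the three classes of maps and of the hypothesis of Theorem~6.3 under the double dualization to be the only delicate step; everything else is a direct appeal to the cited results.
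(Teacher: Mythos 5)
Your proposal is correct and follows essentially the same route as the paper: the paper likewise obtains Proposition~\ref{chris63} by observing that an injective class on $\cA$ is a projective class on $\cA^{\mathrm{op}}$, applying \cite[Theorem~6.3]{christensen2002quillen} to $s(\cA^{\mathrm{op}}) \cong (c\cA)^{\mathrm{op}}$, and dualizing, with the weak equivalences preserved and the cofibration/fibration classes interchanged. Your additional care about the hom-translation $\cA^{\mathrm{op}}(P,X) \cong \cA(X,P)$ and the transport of the fibrancy hypothesis is exactly the bookkeeping the paper leaves implicit.
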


Assume now that $\cA$ is abelian, then the hypothesis of \textbf{Proposition~\ref{chris63}} holds and so the Dold--Kan correspondence satisfying the diagram
\begin{equation*}
	\xymatrix{
			s(\cA^{\mathrm{op}}) \ar[d]^-\cong \ar@<1.8ex>[r]^-N & \ar@<.6ex>[l]^-{\Gamma}_-\cong \mathrm{Ch}_{\geq 0}(\cA^{\mathrm{op}}) \ar[d]^-\cong \\
			     (c\cA)^{\mathrm{op}} \ar[r]^-\cong     & (\mathrm{Ch}^{\geq 0}(\cA))^{\mathrm{op}}
			     	 }
\end{equation*}
induces a model structure on $\mathrm{Ch}^{\geq 0}(\cA)$ given by the following, that can be found in \mbox{\cite[\S4.4]{bousfield2003cosimplicial}}.

\begin{corollary}[Bousfield model structure]
	Let $\cA$ be a bicomplete abelian category with an injective class $\cI$. The category $\mathrm{Ch}^{\geq 0}(\cA)$ of non-negatively graded cochain complexes of objects in $\cA$ has a model structure given as follows.
	\begin{enumerate}
		\item A map $f$ is a weak equivalence if $\cA(f,I)$ is a quasi-isomorphism in $\mathrm{Ch}(\Z)$ for each $I$ in $\cI$.
		\item  A map $f$ is an cofibration if it is $\cI$-monic in positive degrees (but not necessarily in degree 0) in $\cA$, that is $\cA(f,I)$ is surjective in positive degrees for each $I$ in $\cI$.
		\item  A map $f$ is an fibration if it is a degreewise split  epimorphism with \mbox{$\cI$-injective} kernel.
	\end{enumerate}
	Moreover, every complex is cofibrant, and a complex is fibrant if and only if  it is a complex of $\cI$-injectives.
\end{corollary}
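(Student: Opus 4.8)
The plan is to obtain the model structure on $\mathrm{Ch}^{\geq 0}(\cA)$ entirely by transporting the model structure of \textbf{Proposition~\ref{chris63}} along the chain of equivalences displayed above, and then to translate each of the three classes of maps through the duality. First I would record that the hypothesis of \textbf{Proposition~\ref{chris63}} is automatic here: since $\cA$ is abelian, for every $X\in c\cA$ and $I\in\cI$ the simplicial set $\cA(X,I)$ is in fact a simplicial abelian group, hence a Kan complex, so it is fibrant. This legitimizes the model structure on $c\cA$, equivalently on $(c\cA)^{\mathrm{op}}\cong s(\cA^{\mathrm{op}})$, which by Dold--Kan corresponds to the Christensen--Hovey structure on $\mathrm{Ch}_{\geq 0}(\cA^{\mathrm{op}})$ produced by \textbf{Proposition~\ref{cor6.4}} applied to the bicomplete abelian category $\cA^{\mathrm{op}}$ equipped with the projective class $\cP$ dual to $\cI$, so that $P\in\cP$ exactly when the corresponding object of $\cA$ is $\cI$-injective.

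The core of the argument is the contravariant isomorphism of categories $\Phi:\mathrm{Ch}_{\geq 0}(\cA^{\mathrm{op}})\xrightarrow{\cong}(\mathrm{Ch}^{\geq 0}(\cA))^{\mathrm{op}}$ coming from the right-hand vertical equivalence in the diagram: a chain complex $C$ in $\cA^{\mathrm{op}}$ is sent to the cochain complex with $\Phi(C)^n$ the object of $\cA$ underlying $C_n$ and coboundary dual to the boundary $d_{n+1}$. Transporting a model structure along an equivalence of this variance-reversing form produces a model structure whose weak equivalences correspond to weak equivalences but whose cofibrations and fibrations are interchanged. Thus I would declare $g$ in $\mathrm{Ch}^{\geq 0}(\cA)$ to be a weak equivalence, cofibration, or fibration precisely when the corresponding map $f$ in $\mathrm{Ch}_{\geq 0}(\cA^{\mathrm{op}})$ is a weak equivalence, fibration, or cofibration, respectively; the model axioms then transport for free.

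It then remains to identify these transported classes with the explicit descriptions in the statement. The key computation is that for $I\in\cI$ with dual projective $P_I\in\cP$ there is a natural identification $\cA(g,I)\cong\cA^{\mathrm{op}}(P_I,f)$ of complexes of abelian groups, since $\cA^{\mathrm{op}}(P_I,C)_n=\cA(\Phi(C)^n,I)$ and the induced maps agree in each degree. Under this identification, $f$ a $\cP$-equivalence (each $\cA^{\mathrm{op}}(P,f)$ a quasi-isomorphism) becomes $g$ with each $\cA(g,I)$ a quasi-isomorphism, giving (1); $f$ a $\cP$-cofibration (degreewise split monomorphism with $\cP$-projective cokernel) becomes, under the contravariant $\Phi$, $g$ a degreewise split epimorphism with $\cI$-injective kernel, giving the fibrations (3); and $f$ a $\cP$-fibration ($\cA^{\mathrm{op}}(P,f)$ surjective in positive degrees) becomes $g$ with $\cA(g,I)$ surjective in positive degrees, i.e.\ $\cI$-monic in positive degrees, giving the cofibrations (2). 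The ``moreover'' clause dualizes in the same way: every object of $\mathrm{Ch}_{\geq 0}(\cA^{\mathrm{op}})$ is fibrant and the cofibrant objects are exactly the complexes of $\cP$-projectives, which become ``every complex is cofibrant'' and ``the fibrant complexes are exactly the complexes of $\cI$-injectives''.

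The step I expect to be the main obstacle is the bookkeeping of the duality: checking that the reindexing from chain to cochain degrees interacts correctly with the condition ``in positive degrees (but not in degree~$0$)'' and with the homology/quasi-isomorphism condition, and confirming that $\Phi$ carries degreewise split monomorphisms with $\cP$-projective cokernel to exactly the degreewise split epimorphisms with $\cI$-injective kernel (in particular that cokernels dualize to kernels). None of this is deep, but it is precisely where an index- or variance-level slip would hide, so I would carry out the identification $\cA(g,I)\cong\cA^{\mathrm{op}}(P_I,f)$ explicitly in a single degree and let naturality propagate it.
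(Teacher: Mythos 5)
Your proposal is correct and follows essentially the same route as the paper, which likewise obtains the corollary by transporting the model structure of \textbf{Proposition~\ref{chris63}} along the displayed square of equivalences $(c\cA)^{\mathrm{op}}\cong s(\cA^{\mathrm{op}})\cong\mathrm{Ch}_{\geq 0}(\cA^{\mathrm{op}})\cong(\mathrm{Ch}^{\geq 0}(\cA))^{\mathrm{op}}$, with the variance reversal interchanging cofibrations and fibrations. You in fact supply more detail than the paper (which leaves the dualization implicit and cites Bousfield), and your bookkeeping of the three classes and of the ``moreover'' clause is accurate.
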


Taking $\cI$ to be the trivial injective class yields the following example.

\begin{example}\label{b-mod}
	For $\cA$ a bicomplete abelian category, $\mathrm{Ch}^{\geq 0}(\cA)$ has a model structure given by:
	\begin{enumerate}
		\item A map $f$ is a weak equivalence if it is a cochain homotopy equivalence.
		\item A map $f$ is a fibration if it is a degreewise split epimorphism.
		\item A map $f$ is a cofibration if it is a degreewise split monomorphism in positive degrees.
	\end{enumerate}
	Moreover, every complex is fibrant and cofibrant.
\end{example}

	\bibliographystyle{alpha}
	\bibliography{MyBib}

\begin{thebibliography}{Kam78}

\bibitem[Bor94]{borceux1994handbook}
Francis Borceux.
\newblock {\em Handbook of Categorical Algebra: Volume 2, Categories and
  Structures}, volume~2.
\newblock Cambridge University Press, 1994.

\bibitem[Bou03]{bousfield2003cosimplicial}
A.~K. Bousfield.
\newblock Cosimplicial resolutions and homotopy spectral sequences in model
  categories.
\newblock {\em Geometry \& Topology}, 7(2):1001--1053, 2003.

\bibitem[CH02]{christensen2002quillen}
J.~D. Christensen and M.~Hovey.
\newblock Quillen model structures for relative homological algebra.
\newblock In {\em Mathematical Proceedings of the Cambridge Philosophical
  Society}, volume 133, pages 261--293. Cambridge University Press, 2002.

\bibitem[Chr98]{christensen1998ideals}
J.~D. Christensen.
\newblock Ideals in triangulated categories: phantoms, ghosts and skeleta.
\newblock {\em Advances in Mathematics}, 136(2):284--339, 1998.

\bibitem[Col99]{cole1999homotopy}
M.~Cole.
\newblock The homotopy category of chain complexes is a homotopy category.
\newblock Preprint, 1999.

\bibitem[Col06]{cole2006mixing}
M.~Cole.
\newblock Mixing model structures.
\newblock {\em Topology and its Applications}, 153(7):1016--1032, 2006.

\bibitem[GG82]{golasinski1982homotopy}
M.~Golasi{\'n}ski and G.~Gromadzki.
\newblock The homotopy category of chain complexes is a homotopy category.
\newblock In {\em Colloquium Mathematicum}, volume~2, pages 173--178, 1982.

\bibitem[Gil15]{gillespie2015homotopy}
J.~Gillespie.
\newblock The homotopy category of n-complexes is a homotopy category.
\newblock {\em Journal of Homotopy and Related Structures}, 10(1):93--106,
  2015.

\bibitem[GJ09]{goerssJard2009simplicial}
P.~G. Goerss and J.~F. Jardine.
\newblock {\em Simplicial homotopy theory}.
\newblock Springer Science \& Business Media, 2009.

\bibitem[Hov99]{hovey1999model}
Mark Hovey.
\newblock Model categories. vol. 63.
\newblock {\em Mathematical Surveys and Monographs}, (4), 1999.

\bibitem[Jar03]{jardine2003presheaves}
J.~F. Jardine.
\newblock Presheaves of chain complexes.
\newblock {\em K-theory}, 30(4):365--420, 2003.

\bibitem[Kam78]{kamps1978note}
K.~H. Kamps.
\newblock Note on normal sequences of chain complexes.
\newblock In {\em Colloquium Mathematicum}, volume~2, pages 225--227, 1978.

\bibitem[Kel82]{kelly1982basic}
Max Kelly.
\newblock {\em Basic concepts of enriched category theory}, volume~64.
\newblock CUP Archive, 1982.

\bibitem[MP12]{may2011more}
J.~P. May and K.~Ponto.
\newblock {\em More concise algebraic topology: localization, completion, and
  model categories}.
\newblock University of Chicago Press, 2012.

\bibitem[Opa21]{re:michael:msc}
M.~M. Opadotun.
\newblock Simplicial enrichment of chain complexes.
\newblock Master's thesis, University of Regina, 2021.
\newblock \url{http://hdl.handle.net/10294/15021}.

\bibitem[Qui67]{quillen2006homotopical}
D.~G. Quillen.
\newblock {\em Homotopical algebra}, volume~43.
\newblock Springer, 1967.

\bibitem[Rie14]{riehl2014categorical}
Emily Riehl.
\newblock {\em Categorical homotopy theory}, volume~24.
\newblock Cambridge University Press, 2014.

\bibitem[RN17]{transmod}
D.~Robert-Nicoud.
\newblock Transferring model structures along an equivalence of categories.
\newblock 2017.
\newblock
  \url{https://math.stackexchange.com/questions/2171572/transferring-model-structures-along-an-equivalence-of-categories}.

\bibitem[SS03]{schwede2003equivalences}
S.~Schwede and B.~Shipley.
\newblock Equivalences of monoidal model categories.
\newblock {\em Algebraic \& Geometric Topology}, 3(1):287--334, 2003.

\bibitem[SV02]{schwanzl2002strong}
R.~Schw{\"a}nzl and R.~M. Vogt.
\newblock Strong cofibrations and fibrations in enriched categories.
\newblock {\em Archiv der Mathematik}, 79:449--462, 2002.

\bibitem[Wei94]{weibel1994introduction}
C.~A. Weibel.
\newblock {\em An introduction to homological algebra}.
\newblock Number~38. Cambridge university press, 1994.

\bibitem[Whi17]{hmodonNCh}
D.~White.
\newblock Str\o{}m model structure on non-negatively graded chain complexes.
\newblock 2017.
\newblock
  \url{https://mathoverflow.net/questions/336735/str\o{}m-model-structure-on-nonnegatively-graded-chain-complexes}.

\bibitem[Wil13]{williamson2013cylindrical}
R.~Williamson.
\newblock {\em Cylindrical model structures. Second version.}
\newblock PhD thesis, University of Oxford, 2013.

\end{thebibliography}
	
	\vspace*{1cm}
\end{document}